\newcommand*{\C}{\mathbb{C}}
\newcommand*{\R}{\mathbb{R}}
\newcommand*{\Z}{\mathbb{Z}}
\newcommand*\conj[1]{\overline{#1}}
\newcommand*\norm[1]{\lvert#1\rvert}
\newcommand*\complexity[1]{\langle#1\rangle}
\newcommand*\E{\mathbb{E}}
\renewcommand*\P{\mathbb{P}}
\newcommand*\CN{\C\mathcal{N}}
\newcommand*\N{\mathcal{N}}
\newcommand*\eval[1]{\left.#1\right\rvert}
\newcommand*\opt[1]{\hat{#1}}
\newcommand*\U{\mathcal{U}}
\renewcommand*\S{\mathcal{S}}
\newcommand*\pinorm[1]{\norm{#1}_{\Pi}}
\newtheorem{theorem}{Theorem}
\newtheorem{fact}{Fact}
\newtheorem{lemma}{Lemma}
\newtheorem{corollary}{Corollary}
\newtheorem{definition}{Definition}
\newtheorem{proposition}{Proposition}
\newtheorem{claim}{Claim}
\DeclareMathOperator{\diag}{diag}
\DeclareMathOperator{\rel}{rel}
\DeclareMathOperator{\per}{per}
\DeclareMathOperator{\OPT}{OPT}
\DeclareMathOperator{\poly}{poly}
\DeclareMathOperator{\rank}{rank}
\DeclareMathOperator{\spn}{span}
\let\Re\relax
\let\Im\relax
\DeclareMathOperator{\Re}{Re}
\DeclareMathOperator{\Im}{Im}
\DeclareMathOperator{\tr}{tr}
\DeclareMathOperator{\sig}{sig}
\title{Simply Exponential Approximation of the Permanent of Positive Semidefinite Matrices}
\author[1]{Nima Anari}
\author[2]{Leonid Gurvits}
\author[3]{Shayan Oveis Gharan}
\author[1]{Amin Saberi}
\affil[1]{\small Stanford University, {\sf \{anari, saberi\}@stanford.edu}}
\affil[2]{\small The City College of New York, {\sf gurvits@cs.ccny.cuny.edu}}
\affil[3]{\small University of Washington, {\sf shayan@cs.washington.edu}}
\date{}
\begin{document}
	\maketitle
	
	\abstract{
		We design a deterministic polynomial time $c^n$ approximation algorithm for the permanent of positive semidefinite matrices where $c=e^{\gamma+1}\simeq 4.84$. We write a natural convex relaxation and show that its optimum solution gives a $c^n$ approximation of the permanent. We further show that this factor is asymptotically tight by constructing a family of positive semidefinite matrices.
	}

	\section{Introduction}
	Given a matrix $A\in \C^{n\times n}$, its permanent is defined as
	\[ \per(A)=\sum_{\sigma \in S_n} \prod_{i=1}^n A_{i,\sigma(i)}, \]
	where $S_n$ is the set of permutations on $\{1,\dots,n\}$. There is a very rich body of work on permanent of matrices and its algebraic properties, see  \cite{Bap07} for a recent survey on several  theorems and open problems in this area.

	The problem has been also studied from the point of view of computational complexity. Valiant \cite{Val79} showed that it is \#P complete to compute the permanent of $\{0,1\}$-matrices. Aaronson \cite {Aar11} gave a new proof of the \#P hardness, using the model of linear optical quantum computing. In addition, he showed that it is \#P hard to compute the sign of $\per(A)$, essentially ruling out a multiplicative approximation. Grier and Schaeffer \cite{GS16} extended Aaronson's proof and proved  \#P hardness of computing the permanent of real orthogonal matrices. They also showed by a simple polynomial interpolation argument  that it is \#P hard to compute the permanent of PSD matrices. 

	Given a general matrix $A\in \R^{n\times n}$, Gurvits \cite{Gur05} designed a randomized algorithm that in time $O(n^2/\epsilon^2)$ approximates $\per(A)$ within $\pm \norm{A}^n$ additive error, where $\norm{A}$ is the largest singular value of $A$. Chakhmakhchyan, Cerf, and  Garcia-Patron \cite{CCG16} improve on Gurvits's algorithm if the matrix $A$ is PSD and its eigenvalues satisfy a certain smoothness property. 

	If all entries of $A$ are nonnegative then $\per(A)\geq 0$ by definition. In particular, if $A\in \{0,1\}^{n\times n}$, then $\per(A)$ is equal to the number of perfect matchings of the bipartite graph associated with $A$.  Jerrum, Sinclair, and Vigoda \cite{JSV04} in a breakthrough obtained a fully polynomial time randomized approximation scheme (FPRAS) for the permanent of matrices with nonnegative entries. In other words, they designed a randomized algorithm that for any given $\epsilon>0$, outputs a $1+\epsilon$ multiplicative approximation of the permanent, in time polynomial in $n$ and $1/\epsilon$. 

	The focus of this paper is on the permanent of PSD matrices, which   has received significant attention in the last decade because of its close connection to quantum optics.  In particular, permanent of PSD matrices describe output probabilities of a boson sampling experiment in which the input is a tensor product of thermal states. They form the generating function of the quantum linear optical distribution \cite{GS16}.


	It turns out that the permanent is a monotone function with respect to the Loewner order on the cone of PSD matrices and therefore the permanent of every PSD matrix is nonnegative (see \cref{lem:monotone,cor:perpos}). This fact is a priori not obvious considering that a PSD matrix can have negative entries. Since the permanent is nonnegative, unlike general matrices, there is no difficulty in computing the sign. So, it may be possible to design a polynomial time approximation scheme for the permanent of PSD matrices. This question has been posted as an open problem in several sources \cite{Aar15,GS16}. Our main result can be seen as a first step along this line.

	To this date, not much is known about multiplicative approximation of the permanent of PSD matrices. To the best of our knowledge, the only previous result is the work of Marcus \cite{Mar63} which shows that the product of the diagonal entries of a PSD matrix gives an  $n!$ approximation of the permanent. For any PSD matrix $A\in \R^{n\times n}$,
	\[ \prod_{i=1}^n A_{i,i} \leq \per(A)\leq n! \prod_{i=1}^n A_{i,i}.\]
	This approximation can be slightly improved using Lieb's permanent inequality \cite{Lieb02}. Using this inequality one can show that $\per(A)$ can be approximated to within a factor of $n!/m!^{n/m}$ in time $2^{O(m+\log n)}$.

	In this paper we design a $c^n$ multiplicative approximation algorithm for computing the permanent of PSD matrices, where $c>0$ is a universal constant. Prior to our paper, no efficient algorithm (deterministic, randomized, or quantum) was known for simply exponential approximation of the permanent of general positive semidefinite matrices.
	\begin{theorem}
		\label{thm:intro-main}
		There is a deterministic polynomial time algorithm that for any given PSD matrix $A$ returns a number $\rel(A)$ such that
		\[ \rel(A) \geq \per(A) \geq c^{-n}\rel(A) \]
		where $c=e^{\gamma+1}$ and $\gamma$ is Euler's constant.
	\end{theorem}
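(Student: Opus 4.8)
The plan is to exhibit a convex relaxation $\rel$ of the permanent, show it is computable in polynomial time, and then sandwich $\per$ between $\rel$ and $c^{-n}\rel$.

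\textbf{The relaxation.} Fix a Gram decomposition $A_{ij}=\langle v_i,v_j\rangle$ of the PSD matrix, with $v_1,\dots,v_n\in\C^r$ and $r=\rank A$. The starting point is Gurvits's Gaussian formula $\per(A)=\E_{z\sim\CN(0,I_r)}\prod_{i=1}^n\norm{\langle v_i,z\rangle}^2$. Since the integrand is homogeneous of degree $2n$ in $z$, splitting off the radial part gives
\[ \per(A)=\kappa_{r,n}\cdot\E_{u}\Bigl[\,\prod_{i=1}^n\langle v_i v_i^*, uu^*\rangle\,\Bigr],\qquad \kappa_{r,n}=\E\norm z^{2n}=r(r+1)\cdots(r+n-1), \]
where $u$ is uniform on the unit sphere of $\C^r$. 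I would then define $\rel(A):=\kappa_{r,n}\cdot\max\{\prod_{i=1}^n\langle v_i,X v_i\rangle : X\succeq0,\ \tr X=1\}$, that is, replace the random rank-one projector $uu^*$ by the best trace-one PSD matrix. This value is independent of the chosen Gram decomposition, and since $X\mapsto\sum_i\log\langle v_i,X v_i\rangle$ is concave on the spectrahedron $\{X\succeq0,\tr X=1\}$ (a sum of logs of positive linear functionals of $X$), $\rel$ is the value of a convex program; one computes it to within a $1+o(1)$ factor in polynomial time by the ellipsoid or interior-point method — the extra factor being absorbed into $c$ — after the routine steps of keeping the domain away from the boundary and bounding the bit-complexity of a near-optimal $X$.

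\textbf{The upper bound $\rel(A)\ge\per(A)$} is immediate: a random $uu^*$ with $u$ on the unit sphere is a feasible $X$, so the maximum defining $\rel(A)/\kappa_{r,n}$ is at least $\E_u\prod_i\langle v_iv_i^*,uu^*\rangle=\per(A)/\kappa_{r,n}$. (One also has $\per(A)\ge\rel(A)/\kappa_{r,n}$: if $\hat X$ is the maximizer then $(v_i^*\hat Xv_j)_{ij}=V^*\hat XV\preceq V^*V=A$ since $\hat X\preceq I$, so by Marcus's inequality and monotonicity of the permanent, \cref{lem:monotone}, $\per(A)\ge\per(V^*\hat XV)\ge\prod_i\langle v_i,\hat Xv_i\rangle$. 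This alone is far too weak, but it confirms the normalization.)

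\textbf{The lower bound $\per(A)\ge c^{-n}\rel(A)$} is the technical heart. Writing $\hat X$ for the maximizer, it says exactly
\[ \E_u\Bigl[\,\prod_{i=1}^n\norm{\langle v_i,u\rangle}^2\,\Bigr]\ \ge\ e^{-(\gamma+1)n}\prod_{i=1}^n\langle v_i,\hat Xv_i\rangle. \]
Rescaling the $v_i$ (equivalently, conjugating $A$ by a positive diagonal matrix, which multiplies both sides of the theorem by the same factor and leaves $\hat X$ unchanged) I may assume $\langle v_i,\hat Xv_i\rangle=1$ for all $i$, so the target becomes $\E_u\prod_i\norm{\langle v_i,u\rangle}^2\ge e^{-(\gamma+1)n}$, and the first-order optimality conditions for $\hat X$ put the $v_i$ in a near-isotropic position ($\sum_i v_iv_i^*$ proportional to the identity on the relevant subspace). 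I would prove the bound by induction on $n$, peeling off one Gram vector at a time: conditioning on (or integrating out) the component of $u$ along a carefully chosen $v_i$ reduces to an $(n{-}1)$-vector instance in one lower dimension, at the price of a per-step loss factor, and the product of these losses over $k=1,\dots,n$ should telescope to $e^{(\gamma+1)n}$ — the two contributions $e$ and $e^{\gamma}$ arising from a $\prod_k(1+1/k)^k$-type product and from a harmonic-sum correction, respectively. The obstacle is entirely here: a naive peeling loses a factor $\Theta(n)$ per vector, giving a useless $n^{\Theta(n)}$ bound, so one must peel the right vector and control the loss tightly, and matching the constant to precisely $e^{\gamma+1}$ — which the paper shows is optimal via an explicit family of PSD matrices — is the delicate part.
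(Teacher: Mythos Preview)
Your relaxation is not the paper's. The paper sets $\rel(A)=\inf\{\per(D):D\text{ diagonal},\ D\succeq A\}$, i.e.\ it majorizes $A$ by a diagonal matrix in the Loewner order, whereas you replace the random rank-one $uu^*$ in the Gaussian formula by the best trace-one PSD $X$. Both are convex programs and both upper bound $\per(A)$, but they are genuinely different quantities: already for $A=I_n$ the paper's $\rel$ equals $1$ while yours equals $\kappa_{n,n}/n^n\sim (4/e)^n$. So even the computability discussion has to be redone (the paper's convex program is the one-line change of variables $x_i=D_{ii}^{-1}$).

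The real gap is the lower bound. You propose peeling off one Gram vector at a time and candidly admit that a naive peeling loses $\Theta(n)$ per step; you never say which vector to peel, what the induction hypothesis is, or why the per-step losses telescope to $e^{(\gamma+1)n}$. As written this is a hope, not an argument. The paper's proof is entirely different and uses no induction. Working with the diagonal relaxation and normalizing the optimizer to $\hat D=I$, it reads off from the first-order (KKT) condition a correlation matrix $B=U^\dag U$ with $AB=B$; this forces every vector of the form $U^\dag x$ to be an eigenvector of $A$ with eigenvalue $1$, hence $A\succeq vv^\dag$ for $v=U^\dag x/\norm{U^\dag x}$. One then computes
\[
\per(vv^\dag)=\frac{n!}{n^n}\left(\frac{\exp\bigl(\E_{u\sim\mathcal U}[\ln\norm{u^\dag x}^2]\bigr)}{\E_{u\sim\mathcal U}[\norm{u^\dag x}^2]}\right)^{n},
\]
with $\mathcal U$ uniform on the columns of $U$, and the whole proof reduces to showing that this geometric-over-arithmetic ratio, maximized over $x$, is at least $e^{-\gamma}$ for \emph{every} finitely supported distribution $\mathcal U$ on the unit sphere. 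That is dispatched in one stroke by randomizing $x\sim\CN(0,I)$: $\E_x\E_u[\norm{u^\dag x}^2]=1$ while Jensen gives $\E_x\exp(\E_u\ln\norm{u^\dag x}^2)\ge\exp(\E_u\E_x\ln\norm{u^\dag x}^2)=e^{-\gamma}$, so some $x$ achieves ratio $\ge e^{-\gamma}$. Combined with $n!/n^n\ge e^{-n}$ this yields $c=e^{\gamma+1}$. The missing idea in your outline is precisely this KKT-to-correlation-matrix step, which manufactures the rank-one witness and reduces everything to the dimension-free inequality above; nothing like it comes out of a peeling induction.
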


	Our result uses a semidefinite relaxation. Because of the aformenetioned monotonicity of the permanent with respect to the positive semidefinite order, a natural way to upper bound the permanent of a hermitian PSD matrix $A\in \C^{n\times n}$ is to find another matrix $D\succeq A$ whose permanent is easy to compute, and to use $\per(D)$ as the upper bound. For example if $D\succeq A$ is a diagonal matrix, then
	\[ \per(D)=D_{11}D_{22}\dots D_{nn} \]
	gives an easy-to-compute upper bound on $\per(A)$. This motivates the following natural relaxation for the permanent of PSD matrices.
	\begin{definition}
		\label{def:rel}
		For an $n\times n$ hermitian PSD matrix $A$ define
		\begin{equation}
			\label{eq:rel}
			\rel(A):=\inf \{\per(D): D\text{ is diagonal and }D\succeq A\}.
		\end{equation} 
	\end{definition}

	Our main result is to prove that $\rel(A)$ also lower bounds $\per(A)$ up to a multiplicative factor. Additionally, we show that $\rel(A)$ can be efficiently computed using convex programming, thus giving a polynomial-time approximation algorithm for $\per(A)$.

	\section{Preliminaries}
	
	We denote the set $\{1,\dots, n\}$ by $[n]$. We use $S_n$ to denote the set of permutations on $[n]$.
	
	\subsection{Linear Algebra}
	We identify vectors $v\in \C^n$ with $n\times 1$ matrices. For a matrix $A \in \C^{n\times m}$ we let $A^\dag\in \C^{m\times n}$ denote its conjugate transpose; in other words $(A^\dag)_{ij}=\conj{A_{ji}}$. A matrix $A\in \C^{n\times n}$ is called hermitian iff $A=A^\dag$. A hermitian matrix $A$ is called positive semidefinite (PSD) iff $v^\dag A v\geq 0$ for all $v\in \C^n$. We let $\succeq$ denote the usual Loewner order on hermitian matrices, i.e., $A\succeq B$ iff $A-B$ is PSD. For a vector $v\in \C^n$, we let $\diag(v)\in \C^{n\times n}$ denote the diagonal matrix with coordinates of $v$ as its main diagonal, i.e.,
	\[
		\diag(v):= \begin{bmatrix}
			v_1&0&\hdots&0\\
			0&v_2&\hdots&0\\
			\vdots&\vdots&\ddots&\vdots\\
			0&0&\hdots&v_n
		\end{bmatrix}.
	\]
	For matrices $A\in \C^{n\times m}$ and $B\in \C^{p\times q}$ we let $A\otimes B$ denote the Kronecker product, i.e., the following block matrix:
	\[
		A\otimes B:= \begin{bmatrix}
			A_{11}B&\hdots& A_{1m}B\\
			\vdots&\ddots&\vdots\\
			A_{n1}B&\hdots&A_{nm}B
		\end{bmatrix}.
	\]
	For a matrix $A$ and $n\geq 0$, we define $A^{\otimes n}$ as $\overbrace{A\otimes A\otimes\dots\otimes A}^n$. The Kronecker product respects the Loewner order on hermitian PSD matrices:
	\begin{fact}
		\label{fact:tensor-loewner}
		If $A\succeq B\succeq 0$ and $C\succeq D\succeq 0$, then $A\otimes C\succeq B\otimes D\succeq 0$.
	\end{fact}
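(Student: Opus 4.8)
The plan is to reduce everything to the single fact that the Kronecker product of two PSD matrices is PSD, and then to handle the Loewner order by exploiting bilinearity of $\otimes$ through a telescoping decomposition.

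First I would prove the base case: if $P\succeq 0$ and $Q\succeq 0$ then $P\otimes Q\succeq 0$. Writing spectral decompositions $P=\sum_i \lambda_i u_i u_i^\dag$ and $Q=\sum_j \mu_j v_j v_j^\dag$ with all $\lambda_i,\mu_j\geq 0$, bilinearity of the Kronecker product together with the identity $(xy^\dag)\otimes(zw^\dag)=(x\otimes z)(y\otimes w)^\dag$ gives
\[ P\otimes Q=\sum_{i,j}\lambda_i\mu_j\,(u_i\otimes v_j)(u_i\otimes v_j)^\dag, \]
a nonnegative combination of rank-one PSD matrices, hence PSD. Applying this with $P=B$, $Q=D$ yields $B\otimes D\succeq 0$, which is the second assertion of the fact.

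For the first assertion I would use the telescoping identity
\[ A\otimes C-B\otimes D=A\otimes(C-D)+(A-B)\otimes D, \]
which is immediate from bilinearity of $\otimes$. Since $A\succeq 0$ and $C-D\succeq 0$, the base case gives $A\otimes(C-D)\succeq 0$; similarly $A-B\succeq 0$ and $D\succeq 0$ give $(A-B)\otimes D\succeq 0$. The PSD cone is closed under addition, so $A\otimes C-B\otimes D\succeq 0$, i.e. $A\otimes C\succeq B\otimes D$, completing the chain $A\otimes C\succeq B\otimes D\succeq 0$.

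There is no serious obstacle here; the only step needing a short verification is the mixed-product identity $(M\otimes N)(M'\otimes N')=(MM')\otimes(NN')$ (equivalently the rank-one special case used above), which is a direct entrywise computation. Everything else reduces to bilinearity of $\otimes$ and the elementary closure of the PSD cone under sums and nonnegative scalar multiples.
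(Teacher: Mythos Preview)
Your argument is correct and is the standard way to establish this. The paper itself does not prove \cref{fact:tensor-loewner}; it is stated as a background fact without any accompanying proof, so there is nothing to compare against. Your two-step approach (first show $P\succeq 0,\ Q\succeq 0\Rightarrow P\otimes Q\succeq 0$ via spectral decomposition, then telescope $A\otimes C-B\otimes D=A\otimes(C-D)+(A-B)\otimes D$) is exactly how one would fill in this gap, and every step is sound.
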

	
	\subsection{Standard Complex Normal Distribution}
	We say that a complex-valued random variable $g=\Re(g)+i\Im(g)$ is distributed according to a standard complex normal, which we denote by $g\sim \CN(0, 1)$, iff $(\Re(g), \Im(g))\sim \N(0, \frac{1}{2}I)$. The probability density function (over $\C\simeq \R^2$) for this distribution is given by
	\[ \frac{1}{\pi}e^{-(\Re(g)^2+\Im(g)^2)}=\frac{1}{\pi}{e^{-\norm{g}^2}}. \]
	\begin{fact}
		\label{fact:normal-moments}
		If $g\sim \CN(0, 1)$, then for integers $n, m\geq 0$ we have
		\[ \E[g^{n}\conj{g}^m]=\begin{cases}
			0&\text{if }n\neq m,\\
			n!&\text{if }n=m.
		\end{cases}\]
	\end{fact}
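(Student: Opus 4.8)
The plan is to evaluate the integral defining $\E[g^n\conj{g}^m]$ directly, using the density $\frac{1}{\pi}e^{-\norm{g}^2}$ of $\CN(0,1)$ over $\C\simeq\R^2$ and passing to polar coordinates. Writing $g=re^{i\theta}$ with $r\ge 0$ and $\theta\in[0,2\pi)$, the area element becomes $r\,dr\,d\theta$ and $g^n\conj{g}^m=r^{n+m}e^{i(n-m)\theta}$ while $\norm{g}^2=r^2$, so
\[
    \E[g^n\conj{g}^m]=\frac{1}{\pi}\int_0^\infty\int_0^{2\pi} r^{n+m}e^{i(n-m)\theta}e^{-r^2}\,r\,d\theta\,dr .
\]

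First I would handle the case $n\ne m$. The inner integral $\int_0^{2\pi}e^{i(n-m)\theta}\,d\theta$ vanishes whenever $n-m$ is a nonzero integer, so the entire expression is $0$. (Equivalently, and perhaps more conceptually, $g$ and $e^{i\phi}g$ have the same distribution for every real $\phi$; hence $\E[g^n\conj{g}^m]=e^{i(n-m)\phi}\E[g^n\conj{g}^m]$ for all $\phi$, which forces the moment to be zero unless $n=m$.)

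For $n=m$ the $\theta$-integral contributes a factor of $2\pi$, leaving
\[
    \E[\norm{g}^{2n}]=2\int_0^\infty r^{2n+1}e^{-r^2}\,dr .
\]
The substitution $u=r^2$ turns the right-hand side into $\int_0^\infty u^n e^{-u}\,du=\Gamma(n+1)=n!$, which finishes the proof. There is no real obstacle here; the only points requiring a little care are the Jacobian factor $r$ in the change to polar coordinates and recognizing the resulting one-dimensional integral as the Gamma function. If one wishes to avoid the polar substitution, an alternative is to induct on $n$: integrating $\partial_{\conj{g}}\!\big(g^n\conj{g}^{\,n}e^{-\norm{g}^2}\big)$ over $\C$ to zero relates $\E[\norm{g}^{2n}]$ to $n\cdot\E[\norm{g}^{2(n-1)}]$, with the base case $\E[1]=1$ immediate from the normalization of the density.
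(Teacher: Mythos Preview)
Your proof is correct and follows essentially the same route as the paper: circular symmetry (equivalently, the vanishing of $\int_0^{2\pi}e^{i(n-m)\theta}\,d\theta$) handles the case $n\neq m$, and the radial integral $2\int_0^\infty r^{2n+1}e^{-r^2}\,dr$ gives $n!$ when $n=m$. The only cosmetic difference is that the paper evaluates this last integral by integration by parts to obtain the recurrence $\E[\norm{g}^{2n}]=n\,\E[\norm{g}^{2n-2}]$, whereas you substitute $u=r^2$ and recognize $\Gamma(n+1)$; you even mention the recurrence as an alternative, so the two arguments are effectively interchangeable.
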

	\begin{proof}
		The distribution of $g$ is circularly symmetric, i.e. for $u\in \C$ with $\norm{u}=1$, we have $ug\sim\CN(0, 1)$. This means that
		\[ \E[g^n\conj{g}^m]=\E[(ug)^n(\conj{ug})^m]=u^{n-m}\E[g^n\conj{g}^m]. \]
		Therefore, unless $n-m=0$, we have $\E[g^n\conj{g}^m]=0$. When $m=n$, we have $g^n\conj{g}^m=\norm{g}^{2n}$. If we let $r=\norm{g}\in \R_{\geq 0}$, then the probability density function of $r$ is given by $2\pi r \frac{1}{\pi}e^{-r^2}=2re^{-r^2}$. Therefore we have
		\begin{align*}
			\E[\norm{g}^{2n}]&=\int_{0}^\infty r^{2n}\cdot 2re^{-r^2}dr=\eval{-r^{2n}\cdot e^{-r^2}}_0^\infty+\int_0^\infty 2nr^{2n-1}\cdot e^{-r^2}dr\\
			&=n\int_0^\infty r^{2n-2}\cdot 2re^{-r^2}dr = n\cdot \E[\norm{g}^{2n-2}],
		\end{align*}
		where we used integration by parts. We can finally derive
		\[ \E[\norm{g}^{2n}]=n\cdot \E[\norm{g}^{2n-2}]=n(n-1)\cdot \E[\norm{g}^{2n-4}]=\dots=n!\cdot \E[\norm{g}^0]=n!.\]
	\end{proof}
	\begin{fact}
		\label{fact:normal-log}
		If $g\sim \CN(0, 1)$, then
		\[ \E[\ln(\norm{g}^2)]=-\gamma, \]
		where $\gamma$ is Euler's constant.
	\end{fact}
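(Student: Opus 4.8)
The plan is to reduce the claim to a standard integral representation of Euler's constant. Reusing the computation from the proof of \cref{fact:normal-moments}, the random variable $r=\norm{g}$ has probability density $2re^{-r^2}$ on $\R_{\geq 0}$; substituting $x=r^2$ shows that $\norm{g}^2$ is distributed according to the standard exponential distribution, with density $e^{-x}$ on $\R_{\geq 0}$. Hence the quantity to be computed is
\[ \E[\ln(\norm{g}^2)]=\int_0^\infty e^{-x}\ln(x)\,dx. \]

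Next I would identify this integral with the derivative of the Gamma function at $1$. Writing $\Gamma(s)=\int_0^\infty x^{s-1}e^{-x}\,dx$ and differentiating under the integral sign gives $\Gamma'(s)=\int_0^\infty x^{s-1}\ln(x)e^{-x}\,dx$, so the integral above equals $\Gamma'(1)$. Equivalently — and more in the spirit of \cref{fact:normal-moments} — one may note that $\E[\norm{g}^{2s}]=\int_0^\infty x^s e^{-x}\,dx=\Gamma(s+1)$ for real $s$ near $0$, and then differentiate at $s=0$ to get $\E[\ln(\norm{g}^2)]=\Gamma'(1)$. The interchange of $\tfrac{d}{ds}$ with the integral (equivalently, with $\E$) is justified in a neighborhood of $s=1$ by dominated convergence, since $\sup_{\norm{s-1}\leq 1/2} x^{s-1}\norm{\ln x}e^{-x}\leq \max(x^{-1/2},x^{1/2})\norm{\ln x}e^{-x}$ is integrable on $\R_{\geq 0}$.

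It then remains to invoke the classical identity $\Gamma'(1)=-\gamma$ (equivalently $\psi(1)=-\gamma$ for the digamma function), which I would simply cite. If a self-contained derivation is wanted, it follows from the Weierstrass product $1/\Gamma(s)=se^{\gamma s}\prod_{k\geq 1}(1+s/k)e^{-s/k}$ by taking the logarithmic derivative at $s=1$, or directly from $\gamma=\lim_{n\to\infty}\bigl(\sum_{k=1}^n 1/k-\ln n\bigr)$ via a truncation-and-integration-by-parts estimate of $\int_0^\infty e^{-x}\ln(x)\,dx$. There is no genuine obstacle here: the only mildly technical point is the differentiation-under-the-integral step, and even that is routine.
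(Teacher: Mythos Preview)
Your proof is correct and follows essentially the same route as the paper: both arguments identify the law of $\norm{g}^2$ (the paper via the $\chi^2_2=\Gamma(1,2)$ description, you directly via the exponential density $e^{-x}$) and then reduce $\E[\ln(\norm{g}^2)]$ to $\psi(1)=\Gamma'(1)=-\gamma$. The only cosmetic difference is that the paper cites the log-moment formula for Gamma variables, whereas you rederive that special case by differentiating $\Gamma(s)=\int_0^\infty x^{s-1}e^{-x}\,dx$ under the integral sign.
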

	\begin{proof}
		Note that $\norm{g}^2=\Re(g)^2+\Im(g)^2=\frac{1}{2}(2\Re(g)^2+2\Im(g)^2)$. Since $(\Re(g), \Im(g))\sim \N(0, \frac{1}{2}I)$, the random variable $2\Re(g)^2+2\Im(g)^2$ is distributed according to a $\chi^2$-distribution with $2$ degrees of freedom, which is identical to a $\Gamma(1, 2)$ distribution \cite{Cha93}. Therefore we have
		\[ \E[\ln(2\norm{g}^2)]=\psi(1)+\ln(2), \]
		where $\psi$ is the digamma function \cite{Cha93}. This implies that $\E[\ln(\norm{g}^2)]=\psi(1)$, and the latter is equal to $-\gamma$ \cite{AS64}.
	\end{proof}

	We say that a random vector $v\in \C^n$ is distributed according to a standard complex normal, which we denote by $v\sim \CN(0, I)$, iff $v_1,\dots,v_n$ are independent standard complex normals.
	\begin{fact}
		\label{fact:linear-functional}
		If $v\sim \CN(0, I)$, and $u\in \C^n$ is a unit vector, i.e., $\norm{u}^2=u^\dag u=1$, then $u^\dag v\sim \CN(0, 1)$.
	\end{fact}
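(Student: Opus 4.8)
The plan is to reduce the statement to an elementary covariance computation. Write $w:=u^\dag v=\sum_{j=1}^n\conj{u_j}v_j$; by the definition of a standard complex normal it suffices to prove that $(\Re(w),\Im(w))\sim\N(0,\tfrac12 I)$. Stacking the real and imaginary parts of $v_1,\dots,v_n$ into a single real vector of length $2n$, the hypothesis $v\sim\CN(0,I)$ is exactly the statement that this vector is distributed as $\N(0,\tfrac12 I_{2n})$. Since $w$ is a fixed $\C$-linear, hence $\R$-linear, function of that vector, both $\Re(w)$ and $\Im(w)$ are real linear combinations of jointly Gaussian mean-zero variables; therefore $(\Re(w),\Im(w))$ is a centered Gaussian vector in $\R^2$, and the only thing left is to identify its $2\times 2$ covariance matrix.

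For that step I would expand in coordinates: writing $u_j=a_j+ib_j$ and $v_j=x_j+iy_j$ gives $\Re(w)=\sum_j(a_jx_j+b_jy_j)$ and $\Im(w)=\sum_j(a_jy_j-b_jx_j)$, where the $2n$ variables $\{x_j,y_j\}$ are independent, each of variance $\tfrac12$. A short calculation then yields $\Var(\Re(w))=\Var(\Im(w))=\tfrac12\sum_j(a_j^2+b_j^2)=\tfrac12\norm{u}^2=\tfrac12$ and $\Cov(\Re(w),\Im(w))=0$, using $\norm{u}^2=\sum_j\norm{u_j}^2=1$. This is precisely $\N(0,\tfrac12 I)$, which is the claim.

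I do not expect a genuine obstacle here; the only thing to be careful about is the bookkeeping of real and imaginary parts (in particular that conjugating $u$ flips the sign of $b_j$) and remembering to invoke $\norm{u}=1$ at the end. If one prefers a coordinate-free argument, an alternative is to extend $u$ to an orthonormal basis of $\C^n$ and let $U$ be the unitary matrix whose first row is $u^\dag$; viewed as an $\R$-linear map on $\R^{2n}\cong\C^n$, a unitary matrix preserves the real inner product and is therefore orthogonal, so it leaves $\N(0,\tfrac12 I_{2n})$ invariant. Hence $Uv\sim\CN(0,I)$, and reading off its first coordinate gives $u^\dag v\sim\CN(0,1)$.
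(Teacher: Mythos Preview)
Your proof is correct. Both you and the paper begin the same way: $(\Re(w),\Im(w))$ is a linear image of a real Gaussian vector, hence Gaussian, and it remains only to identify the $2\times 2$ covariance. From there the arguments diverge. You expand $w=\sum_j(a_j-ib_j)(x_j+iy_j)$ in real coordinates and compute the three covariance entries directly, using independence and $\Var(x_j)=\Var(y_j)=\tfrac12$. The paper instead invokes circular symmetry: since $\phi v\sim\CN(0,I)$ for any unit scalar $\phi$, the law of $u^\dag v$ is rotation-invariant, which forces the mean to be zero and the covariance to be a scalar multiple of $I$; that scalar is then read off from the single computation $\E[\norm{u^\dag v}^2]=u^\dag\E[vv^\dag]u=1$. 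The symmetry argument is a bit slicker and avoids the $a_j,b_j$ bookkeeping, but your direct calculation is perfectly fine and arguably more self-contained. Your alternative unitary-invariance argument (extend $u$ to an orthonormal basis and use that unitaries act orthogonally on $\R^{2n}$) is yet a third route and is the cleanest of the three.
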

	\begin{proof}
		Note that $(\Re(u^\dag v), \Im(u^\dag v))$ are linear combinations of the real and imaginary parts of $v$; as such, this $2$-dimensional vector is distributed according to $\N(\mu, \Sigma)$ for some $\mu\in \R^2$ and $\Sigma\in \R^{2\times 2}$.
		
		The distribution of $u^\dag v$ is circularly symmetric; i.e., if $\phi\in \C$ is such that $\norm{\phi}=1$, then $\phi u^\dag v$ is distributed the same way as $u^\dag v$. This is true because $\phi u^\dag v=u^\dag (\phi v)$, and $\phi v$ has the same distribution as $v$. Being circularly symmetric implies that $\mu=0$ and $\Sigma=cI$ for some constant $c$. On the other hand, we have
		\[ 2c=\E[\norm{u^\dag v}^2]=\E[u^\dag vv^\dag u]=u^\dag \E[vv^\dag]u=u^\dag I u=\norm{u}^2 = 1. \]
		Therefore $(\Re(u^\dag v), \Im(u^\dag v))\sim \N(0, \frac{1}{2}I)$ or in other words, $u^\dag v\sim \CN(0, 1)$.
	\end{proof}
	
	\subsection{Permanent and Loewner Order}
	For a matrix $A\in \C^{n\times n}$, its permanent is defined as
	\[ \per(A):=\sum_{\sigma\in S_n}\prod_{i=1}^n A_{i,\sigma(i)}. \]
	
	Permanent is a monotone function on the space of PSD matrices w.r.t. the Loewner order. For completeness we sketch the proof given in \cite{Bap07} here.
	\begin{lemma}
		\label{lem:pertensor}
		For any  matrix $M\in \C^{n\times n}$, there is a vector $1_{S_n}\in \C^{n^n}$
		such that 
		\[ \per(M):= \frac{1}{n!} 1_{S_n}^\dag M^{\otimes n}1_{S_n}. \]
	\end{lemma}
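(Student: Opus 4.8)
The plan is to exhibit an explicit vector $1_{S_n}\in\C^{n^n}$, indexed by tuples in $[n]^n$, and compute the quadratic form $1_{S_n}^\dag M^{\otimes n}1_{S_n}$ directly against the known formula for the permanent. Recall that $M^{\otimes n}$ is an $n^n\times n^n$ matrix whose rows and columns are naturally indexed by functions $f,g\colon[n]\to[n]$ (equivalently tuples $(f(1),\dots,f(n))$), with entry $(M^{\otimes n})_{f,g}=\prod_{i=1}^n M_{f(i),g(i)}$. The natural choice is $1_{S_n}$ to be the indicator of the permutations: $(1_{S_n})_f=1$ if $f\in S_n$ and $0$ otherwise. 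Then
\[
1_{S_n}^\dag M^{\otimes n}1_{S_n}=\sum_{f,g\in S_n}\prod_{i=1}^n M_{f(i),g(i)}.
\]

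The key step is to evaluate this double sum over pairs of permutations. For fixed $f,g\in S_n$, reindexing the product by $j=f(i)$, i.e.\ $i=f^{-1}(j)$, gives $\prod_{i=1}^n M_{f(i),g(i)}=\prod_{j=1}^n M_{j,(g\circ f^{-1})(j)}=\prod_{j=1}^n M_{j,\sigma(j)}$ where $\sigma=g\circ f^{-1}\in S_n$. As $f$ ranges over all of $S_n$ and $g$ over all of $S_n$, the pair $(f,\sigma)$ ranges over $S_n\times S_n$, so each value $\sigma$ is hit exactly $n!$ times (once for each choice of $f$). Hence the double sum equals $n!\sum_{\sigma\in S_n}\prod_{j=1}^n M_{j,\sigma(j)}=n!\,\per(M)$, and dividing by $n!$ yields the claim. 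Note this simultaneously shows $1_{S_n}$ does not depend on $M$, as required.

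There is essentially no obstacle here; the only point requiring a little care is the bookkeeping in the reindexing — making sure that the change of variables $j=f(i)$ is a bijection of $[n]$ (it is, since $f$ is a permutation) and that the map $(f,g)\mapsto(f,g\circ f^{-1})$ is a bijection of $S_n\times S_n$ onto itself (it is, with inverse $(f,\sigma)\mapsto(f,\sigma\circ f)$). One should also confirm that rows/columns of $M^{\otimes n}$ are indeed indexed by $[n]^n$ in the way described, which follows by a straightforward induction on $n$ from the block structure of the Kronecker product given in the preliminaries. The identity then extends from $S_n$-indexed pairs to the full quadratic form simply because the coordinates of $1_{S_n}$ outside $S_n$ vanish.
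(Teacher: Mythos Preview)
Your proof is correct and follows essentially the same approach as the paper: define $1_{S_n}$ as the indicator vector of permutations in $[n]^n$, expand the quadratic form as a double sum over pairs of permutations, and observe that each term equals $\per(M)$. The paper is slightly more terse at the reindexing step (writing $\sum_{\sigma,\sigma'\in S_n}\prod_i M_{\sigma(i),\sigma'(i)}=\sum_{\sigma\in S_n}\per(M)$ without spelling out the substitution $j=\sigma(i)$), but your explicit bijection $(f,g)\mapsto(f,g\circ f^{-1})$ is exactly what underlies that equality.
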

	\begin{proof}
		The vector $1_{S_n}\in \C^{n^n}$ is constructed in the following way: Index each of the $n^n$ coordinates by $\sigma\in [n]^n$ in the usual way (so that the indices respect the Kronecker product); we can think of $\sigma$ as a function from $[n]$ to $[n]$. Then let the $\sigma$-th coordinate of $1_{S_n}$ be $1$ iff $\sigma$ is a permutation on $[n]$, and let it be $0$ otherwise. Then, for a matrix $M$ we have
		\[ 1_{S_n}^\dag M^{\otimes n}1_{S_n}=\sum_{\sigma\in S_n}\sum_{\sigma'\in S_n}\prod_{i=1}^n M_{\sigma(i),\sigma'(i)}=\sum_{\sigma\in S_n}\per(M)=n!\cdot \per(M). \]
	\end{proof}
	
	\begin{corollary}
		\label{lem:monotone}
		If $A, B\in \C^{n\times n}$ are hermitian and $A\succeq B\succeq 0$, then
		\[ \per(A)\geq \per(B). \]
	\end{corollary}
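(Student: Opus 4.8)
The plan is to combine the tensor-power identity from \cref{lem:pertensor} with the fact that the Kronecker product is monotone with respect to the Loewner order. The point is that \cref{lem:pertensor} writes $\per(A)$ and $\per(B)$ as quadratic forms $\frac{1}{n!}1_{S_n}^\dag A^{\otimes n}1_{S_n}$ and $\frac{1}{n!}1_{S_n}^\dag B^{\otimes n}1_{S_n}$ evaluated against the \emph{same} fixed vector $1_{S_n}\in\C^{n^n}$. So it suffices to show that $A^{\otimes n}\succeq B^{\otimes n}$ as hermitian matrices, since then evaluating the PSD matrix $A^{\otimes n}-B^{\otimes n}$ against $1_{S_n}$ gives a nonnegative number, i.e.\ $1_{S_n}^\dag A^{\otimes n}1_{S_n}\ge 1_{S_n}^\dag B^{\otimes n}1_{S_n}$, and dividing by $n!$ yields $\per(A)\ge\per(B)$.

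The remaining step, $A^{\otimes n}\succeq B^{\otimes n}\succeq 0$, follows from \cref{fact:tensor-loewner} by induction on the number of tensor factors. The base case is the hypothesis $A\succeq B\succeq 0$. For the inductive step, assuming $A^{\otimes k}\succeq B^{\otimes k}\succeq 0$, apply \cref{fact:tensor-loewner} with the pair $(A,B)$ and the pair $(A^{\otimes k},B^{\otimes k})$ to conclude $A\otimes A^{\otimes k}\succeq B\otimes B^{\otimes k}\succeq 0$, that is, $A^{\otimes(k+1)}\succeq B^{\otimes(k+1)}\succeq 0$. (One should also note that $A$ being hermitian PSD implies $A^{\otimes n}$ is hermitian, so that $\succeq$ makes sense here; this is immediate from $(A\otimes C)^\dag=A^\dag\otimes C^\dag$.)

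There is essentially no hard part: the only thing to be careful about is that \cref{lem:pertensor} is applied with the same vector $1_{S_n}$ on both sides, so that the inequality $A^{\otimes n}\succeq B^{\otimes n}$ transfers directly to the permanents without any cross terms. Putting the pieces together: by \cref{lem:pertensor},
\[
	n!\cdot\per(A)=1_{S_n}^\dag A^{\otimes n}1_{S_n}\ge 1_{S_n}^\dag B^{\otimes n}1_{S_n}=n!\cdot\per(B),
\]
where the inequality is because $A^{\otimes n}-B^{\otimes n}\succeq 0$. Dividing by $n!$ completes the proof. As a side remark, taking $B=0$ (or $B=\diag(0)$) recovers $\per(A)\ge 0$ for every hermitian PSD $A$, which is the positivity statement alluded to in the introduction.
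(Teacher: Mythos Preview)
Your proof is correct and follows essentially the same approach as the paper: apply \cref{lem:pertensor} to write both permanents as the quadratic form $\tfrac{1}{n!}1_{S_n}^\dag(\cdot)^{\otimes n}1_{S_n}$, and use \cref{fact:tensor-loewner} to get $A^{\otimes n}\succeq B^{\otimes n}\succeq 0$. The only difference is that you spell out the induction on the number of tensor factors explicitly, whereas the paper invokes \cref{fact:tensor-loewner} in one line.
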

	\begin{proof}
		The statement of the lemma follows, because $A\succeq B\succeq 0$ implies that $A^{\otimes n}\succeq B^{\otimes n}\succeq 0$ by \cref{fact:tensor-loewner}. So, by \cref{lem:pertensor}, 
		$$ \per(A) = \frac{1}{n!} 1_{S_n}^\dag A^{\otimes n}1_{S_n} \geq \frac{1}{n!} 1_{S_n}^\dag B^{\otimes n}1_{S_n} = \per(B)$$
		as desired.
	\end{proof}

	\begin{corollary}
		\label{cor:perpos}
		For any hermitian PSD matrix $A\in \C^{n\times n}$, $\per(A)	\geq 0$.
	\end{corollary}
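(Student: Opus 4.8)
The plan is to derive this as an immediate special case of the monotonicity statement just established. Since $A$ is PSD we have $A\succeq 0$, where $0$ denotes the $n\times n$ zero matrix, and trivially $0\succeq 0$. Applying \cref{lem:monotone} with $B$ equal to the zero matrix yields $\per(A)\geq \per(0)$. It then only remains to note that $\per(0)=0$, which is clear from the definition of the permanent, since every summand $\prod_{i=1}^n 0_{i,\sigma(i)}$ is a product in which each factor is $0$. Combining these gives $\per(A)\geq 0$.

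An equivalent and equally short route is to invoke \cref{lem:pertensor} directly: because $A\succeq 0$, \cref{fact:tensor-loewner} gives $A^{\otimes n}\succeq 0$, so the quadratic form $1_{S_n}^\dag A^{\otimes n}1_{S_n}$ is nonnegative, and dividing by $n!$ preserves nonnegativity, whence $\per(A)\geq 0$. There is no genuine obstacle in either approach — all of the substantive content (the tensor-power identity for the permanent in \cref{lem:pertensor} and the compatibility of the Kronecker product with the Loewner order in \cref{fact:tensor-loewner}) has already been carried out, so this statement is a one-line consequence.
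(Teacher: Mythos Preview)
Your first argument is exactly the paper's proof: apply \cref{lem:monotone} with $B=0$ and use $\per(0)=0$. The alternative route via \cref{lem:pertensor} and \cref{fact:tensor-loewner} is also correct and amounts to unpacking that same monotonicity argument in the special case $B=0$.
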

	\begin{proof}
		This follows from \cref{lem:monotone} by setting $B=0$.
	\end{proof}
	
	There is another way to show nonnegativity of the permanent over the PSD cone with the help of the complex normal distribution. For a vector $v\in \C^n$ define
	\[ \pinorm{v} := \sqrt{\prod_{i=1}^n\norm{v_i}^2}\geq 0. \]
	Then with the help of $\pinorm{\cdot}$ we can express the permanent of a PSD matrix as an expectation of a nonnegative value.
	\begin{lemma}
		\label{lem:cauchy}
		Let $U\in \C^{d\times n}$ be arbitrary and let $x\in \C^d$ be a random vector distributed according to the standard complex normal $\CN(0, I)$. Then
		\[ \per(U^\dag U)=\E_{x\sim \CN(0, I)}[\pinorm{U^\dag x}^2]. \]
	\end{lemma}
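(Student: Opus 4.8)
The plan is to carry out a direct coordinate computation (a Gaussian‑integral representation of the permanent of a Gram matrix, classically going back to Cauchy): expand the product of linear forms on the right‑hand side into monomials in the entries of $x$, take the expectation term by term using independence of the coordinates together with \cref{fact:normal-moments}, and match the surviving terms with the summands of $\per(U^\dag U)$ indexed by permutations.

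First I would let $u_1,\dots,u_n\in\C^d$ be the columns of $U$, so that $(U^\dag x)_i=u_i^\dag x$ and hence
\[ \pinorm{U^\dag x}^2=\prod_{i=1}^n\norm{u_i^\dag x}^2=\prod_{i=1}^n(u_i^\dag x)\,\conj{(u_i^\dag x)}. \]
Substituting $u_i^\dag x=\sum_{k=1}^d\conj{U_{ki}}\,x_k$ and $\conj{(u_i^\dag x)}=\sum_{m=1}^d U_{mi}\,\conj{x_m}$ and multiplying out the two products over $i$ yields a sum over pairs of index functions $f,g\colon[n]\to[d]$, with coefficient $\prod_{i=1}^n\conj{U_{f(i),i}}\,U_{g(i),i}$ attached to the monomial $\prod_{i=1}^n x_{f(i)}\,\conj{x_{g(i)}}$. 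Taking expectations, $\E[\pinorm{U^\dag x}^2]$ becomes a weighted sum of the mixed moments $\E[\prod_i x_{f(i)}\conj{x_{g(i)}}]$.

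Next, because $x_1,\dots,x_d$ are independent standard complex normals, each such moment factors over the coordinate $k\in[d]$, where $x_k$ appears with exponent $\#f^{-1}(k)$ and $\conj{x_k}$ with exponent $\#g^{-1}(k)$; by \cref{fact:normal-moments} it vanishes unless $\#f^{-1}(k)=\#g^{-1}(k)$ for all $k$, in which case it equals $\prod_k(\#f^{-1}(k))!$. The combinatorial observation I need is that this product of factorials is exactly the number of permutations $\pi\in S_n$ with $f=g\circ\pi$: any such $\pi$ must restrict to a bijection $f^{-1}(k)\to g^{-1}(k)$ for each $k$, and conversely every such family of bijections assembles into one. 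Substituting this count and interchanging the order of summation, I move the sum over $\pi\in S_n$ to the outside; since the constraint $f=g\circ\pi$ recovers $f$ from $g$, only the sum over $g\colon[n]\to[d]$ remains, giving $\sum_{\pi\in S_n}\sum_{g}\prod_i\conj{U_{g(\pi(i)),i}}\,U_{g(i),i}$.

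Finally, reindexing the $\conj{U}$‑product by $j=\pi(i)$ turns the summand into $\prod_i\conj{U_{g(i),\pi^{-1}(i)}}\,U_{g(i),i}$, and now the sum over $g$ factors across $i\in[n]$, since the $i$‑th factor depends on $g$ only through $g(i)$; each factor collapses to $\sum_{m=1}^d\conj{U_{m,\pi^{-1}(i)}}\,U_{m,i}=(U^\dag U)_{\pi^{-1}(i),i}$. One more change of index in the product gives $\prod_i(U^\dag U)_{\pi^{-1}(i),i}=\prod_j(U^\dag U)_{j,\pi(j)}$, and summing over $\pi\in S_n$ is precisely $\per(U^\dag U)$. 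The step I expect to need the most care is the moment computation above: one has to handle non‑injective $f,g$ (repeated coordinate indices) correctly, both when deciding whether $\E[\prod_i x_{f(i)}\conj{x_{g(i)}}]$ vanishes and when checking that its value $\prod_k(\#f^{-1}(k))!$ agrees exactly with the count of the relevant permutations; the remaining steps are routine relabeling. (An alternative route is to combine \cref{lem:pertensor} with the identity $\E[x^{\otimes n}(x^{\otimes n})^\dag]=\sum_{\pi\in S_n}P_\pi$, where $P_\pi$ permutes the $n$ tensor factors of $(\C^d)^{\otimes n}$, using that $P_\pi$ commutes with $U^{\otimes n}$, that $1_{S_n}$ is $P_\pi$‑invariant, and that $(U^\dag U)^{\otimes n}=(U^{\otimes n})^\dag U^{\otimes n}$; but proving that tensor identity needs the same Gaussian moment bookkeeping.)
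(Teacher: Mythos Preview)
Your proof is correct and follows essentially the same approach as the paper's own argument: expand $\pinorm{U^\dag x}^2$ as a sum over pairs of maps $[n]\to[d]$, use \cref{fact:normal-moments} to see that only pairs with matching preimage sizes survive with weight $\prod_k k!$, reinterpret that weight as a count of permutations $\pi$ with $f=g\circ\pi$, and then collapse the resulting sum to $\per(U^\dag U)$. The paper's version introduces the ``signature'' $(k_1,\dots,k_d)$ explicitly and groups terms by it, but the combinatorial core and the final reindexing are identical to yours.
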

	\Cref{lem:cauchy} is a sepcial case of the relationship between the so-called $G$-norm and the quantum permanent shown in \cite{Gur03}. In particular if the rows of $U$ are $u_1^\dag,\dots,u_d^\dag$, then
	\[ \pinorm{U^\dag x}^2 = \norm{\det(\sum_{i=1}^d x_i\diag(v_i))}^2, \]
	and therefore $\E_x[\pinorm{U^\dag x}^2]$ is the same as the $G$-norm of the polynomial $\det(\sum_{i=1}^d x_i\diag(u_i))$. In \cite{Gur03} this is shown to be equal to the quantum permanent of the linear operator with Choi form given by the matrices $\diag(u_1),\dots,\diag(u_d)$. It can be further shown that in this special case, the quantum permanent reduces to $\per(U^\dag U)$. For exact definitions and further details see \cite{Gur03}.
	
	For the sake of completeness, we give a self-contained proof of \cref{lem:cauchy} below.
	\begin{proof}[Proof of \cref{lem:cauchy}]
		We will use the fact that the expression $\pinorm{U^\dag x}^2$ is a polynomial in $x_1,\dots, x_d$ and $\conj{x_1},\dots,\conj{x_d}$; therefore we can evaluate its expectation with the help of \cref{fact:normal-moments}. We have
		\[ \pinorm{U^\dag x}^2=\norm{\prod_{i=1}^n \sum_{j=1}^d \conj{U_{ji}}x_j}^2 \]
		If we define
		\[ p(x):= \prod_{i=1}^n \sum_{j=1}^d \conj{U_{ji}}x_j, \]
		then $\pinorm{U^\dag x}^2 = p(x)\conj{p(x)}$. Note that $p(x)$ is a polynomial in terms of $x_1,\dots, x_d$. We can expand $p(x)$ as follows:
		\[ p(x)=\sum_{\sigma:[n]\to[d]}\prod_{i=1}^n \conj{U_{\sigma(i), i}}x_{\sigma(i)}, \]
		where the sum is taken over all $n^d$ functions $\sigma:[n]\to[d]$. For a function $\sigma:[n]\to[d]$, let $\sig(\sigma)$ be $(k_1,\dots,k_d)\in \Z^d$ where $k_j$ is the number of $i\in[n]$ such that $\sigma(i)=j$. Then we can alternatively write
		\[ p(x)=\sum_{\substack{k_1+\dots+k_d=n\\k_1,\dots,k_d\geq 0}}\left(x_1^{k_1}\dots x_d^{k_d}\sum_{\substack{\sigma:[n]\to[d]\\ \sig(\sigma)=(k_1,\dots,k_d)}}\prod_{i=1}^n \conj{U_{\sigma(i), i}}\right). \]
		For $(k_1,\dots,k_d)\neq (k_1',\dots,k_d')$, by \cref{fact:normal-moments} we have $\E_x[x_1^{k_1}\dots x_d^{k_d}\conj{x_1^{k_1'}\dots x_d^{k_d'}}]=0$. Therefore we can write
		\[ \E_x[p(x)\conj{p(x)}]=\sum_{\substack{k_1+\dots+k_d=n\\k_1,\dots,k_d\geq 0}}\left(k_1!\dots k_d! \adjustlimits\sum_{\substack{\sigma:[n]\to[d]\\ \sig(\sigma)=(k_1,\dots,k_d)}}\sum_{\substack{\sigma':[n]\to[d]\\ \sig(\sigma')=(k_1,\dots,k_d)}}\prod_{i=1}^n \conj{U_{\sigma(i),i}}U_{\sigma'(i),i}\right), \]
		where we used that $\E[x_1^{k_1}\dots x_d^{k_d}\conj{x_1^{k_1}\dots x_d^{k_d}}]=k_1!\dots k_d!$ by \cref{fact:normal-moments}. Note that when $\sig(\sigma)=\sig(\sigma')$, there is a permutation $\pi\in S_n$ such that $\sigma'=\sigma\circ \pi$. In fact if $\sig(\sigma)=\sig(\sigma')=(k_1,\dots,k_d)$, then the number of $\pi\in S_n$ for which $\sigma'=\sigma\circ \pi$ is exactly equal to $k_1!\dots k_d!$. Therefore we can rewrite the above sum as
		\begin{align*} \E_x[p(x)\conj{p(x)}]&=\sum_{\sigma:[n]\to[d]}\sum_{\pi\in S_n} \prod_{i=1}^n \conj{U_{\sigma(i), i}}U_{\sigma(\pi(i)), i}=\sum_{\pi\in S_n}\sum_{\sigma[n]\to[d]}\prod_{i=1}^n (U^\dag)_{i, \sigma(i)}U_{\sigma(i), \pi^{-1}(i)}\\
			&=\sum_{\pi\in S_n}\prod_{i=1}^n\sum_{j=1}^d (U^\dag)_{i, j}U_{j, \pi^{-1}(i)}=\sum_{\pi\in S_n}\prod_{i=1}^n (U^\dag U)_{i, \pi^{-1}(i)}=\per(U^\dag U).
		\end{align*}

	\end{proof}

	\section{Approximation of Permanent on the PSD Cone}

	
	
	
	
	In this section we prove \cref{thm:intro-main}.
	Recall the definition of $\rel(A)$ from \cref{def:rel}. Our first step is to prove that for every $n\times n$ hermitian PSD matrix $A\succeq 0$:
		\begin{equation}
			\label{thm:main}
			c^n\per(A)\geq \rel(A),
		\end{equation}
		where $c=e^{\gamma+1}$.
		
	
	In order to prove \cref{thm:main}, we also introduce a lower bound on $\per(A)$. We find a vector $v\in \C^n$ such that $A\succeq vv^\dag$.  By \cref{lem:monotone},  $\per(A)\geq \per(vv^\dag)$. So in order to prove \cref{thm:main} it suffices to prove: 
	\begin{theorem}
		\label{thm:rank1}
		For a hermitian PSD matrix $A\in \C^{n\times n}$, there exists $v\in \C^n$ such that $A\succeq vv^\dag$ and
		\[ c^n \per(vv^\dag)\geq \rel(A), \]
		where $c=e^{\gamma+1}$.
	\end{theorem}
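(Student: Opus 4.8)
The plan is to normalize, extract a dual certificate from the optimality of the relaxation, reduce the statement to a clean inequality about unit vectors, and prove that inequality by the probabilistic method together with \cref{fact:normal-log}. First, normalize. If some $A_{ii}=0$ then, since $A\succeq 0$, the $i$-th row and column of $A$ vanish, so the diagonal matrix with $i$-th entry $0$ still dominates $A$, hence $\rel(A)=0$ and $v=0$ works; so assume $A_{ii}>0$ for all $i$, in which case the infimum in \cref{def:rel} is attained (by compactness) at some $D=\diag(d)$ with all $d_i>0$, so $\rel(A)=\prod_i d_i$. Put $B:=D^{-1/2}AD^{-1/2}$. Since $D\succeq A$ is equivalent to $I\succeq B$, we have $0\preceq B\preceq I$, and minimality of $D$ forces $\lambda_{\max}(B)=1$ (otherwise $D-A=D^{1/2}(I-B)D^{1/2}\succ 0$ and some $d_i$ could be decreased, contradicting optimality). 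If $\tilde v\tilde v^\dag\preceq B$ then $v:=D^{1/2}\tilde v$ satisfies $vv^\dag\preceq D^{1/2}BD^{1/2}=A$ and $\pinorm{v}^2=(\prod_i d_i)\pinorm{\tilde v}^2=\rel(A)\pinorm{\tilde v}^2$; since $\per(vv^\dag)=n!\,\pinorm{v}^2$ (directly, or via \cref{lem:cauchy} applied to a $1\times n$ matrix), it suffices to produce $\tilde v$ with $\tilde v\tilde v^\dag\preceq B$ and $\pinorm{\tilde v}^2\ge\frac{1}{c^n n!}$.

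Next, extract a dual certificate. The first-order optimality conditions for $\min\{\prod_i d_i:\diag(d)\succeq A\}$ at $d$ are linear, and dualizing them (Farkas/conic duality) yields $\Lambda'\succeq 0$ with $\Lambda'_{ii}=1/d_i$ and $\mathrm{range}(\Lambda')\subseteq\ker(D-A)=D^{-1/2}E$, where $E:=\ker(I-B)$ is the top eigenspace of $B$. Put $\Lambda:=D^{1/2}\Lambda'D^{1/2}$, so $\Lambda\succeq 0$, $\Lambda_{ii}=1$, and $\mathrm{range}(\Lambda)\subseteq E$. Write $\Lambda=W^\dag W$ with $W$ of full row rank $\rank(\Lambda)$; then its columns $w_1,\dots,w_n$ satisfy $\norm{w_i}^2=\Lambda_{ii}=1$ and $\mathrm{range}(W^\dag)=\mathrm{range}(\Lambda)$. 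For $z$ to be chosen, set $\tilde v:=W^\dag z$, so $\tilde v_i=w_i^\dag z$, hence $\pinorm{\tilde v}^2=\prod_i\norm{w_i^\dag z}^2$ and $\norm{\tilde v}^2=\sum_i\norm{w_i^\dag z}^2$. Since $\tilde v\in\mathrm{range}(\Lambda)\subseteq E$ and $B\succeq P_E$ (because $0\preceq B\preceq I$ and $E$ is exactly the $1$-eigenspace of $B$), any $z$ with $\norm{\tilde v}\le 1$ gives $\tilde v\tilde v^\dag\preceq\norm{\tilde v}^2 P_E\preceq P_E\preceq B$, where $P_E$ is the orthogonal projection onto $E$ and the first inequality is Cauchy--Schwarz. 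So it remains to prove: for any unit vectors $w_1,\dots,w_n$,
\[
\sup_{z\ne 0}\ \frac{\prod_i\norm{w_i^\dag z}^2}{\big(\sum_i\norm{w_i^\dag z}^2\big)^{n}}\ \ge\ \frac{1}{c^n n!},
\]
after which rescaling $z$ to make $\sum_i\norm{w_i^\dag z}^2=1$ produces the desired $\tilde v$.

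To prove this inequality, take $z\sim\CN(0,I)$ and set $\xi_i:=w_i^\dag z$, which is $\CN(0,1)$ by \cref{fact:linear-functional}; thus $\E[\norm{\xi_i}^2]=1$ by \cref{fact:normal-moments} and $\E[\ln\norm{\xi_i}^2]=-\gamma$ by \cref{fact:normal-log}. Let $Y:=\prod_i\norm{\xi_i}^2\big/\big(\sum_i\norm{\xi_i}^2\big)^n$, which lies in $(0,n^{-n}]$ by AM--GM, so all expectations below are finite. By Jensen (concavity of $\ln$), $\E[Y]\ge e^{\E[\ln Y]}$, and
\[
\E[\ln Y]=\sum_i\E[\ln\norm{\xi_i}^2]-n\,\E\!\left[\ln\sum_i\norm{\xi_i}^2\right]\ \ge\ -n\gamma-n\ln\E\!\left[\sum_i\norm{\xi_i}^2\right]=-n\gamma-n\ln n,
\]
the middle step being Jensen once more for the concave $\ln$. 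Hence $\E[Y]\ge e^{-n\gamma}n^{-n}$, so some realization of $z$ achieves $Y\ge e^{-n\gamma}n^{-n}$; and $e^{-n\gamma}n^{-n}\ge\frac{1}{c^n n!}$ is, since $c=e^{\gamma+1}$, equivalent to $n!\ge(n/e)^n$, which holds because $e^n\ge n^n/n!$. This completes the proof.

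The main obstacle is the dual certificate. Because the relaxation minimizes $\prod_i d_i$, equivalently the concave function $\sum_i\ln d_i$, there is no useful ordinary Lagrangian dual, and one has to argue from first-order optimality via a Farkas/bipolar argument that $(1/d_i)_i$ lies in the cone $\{\diag(\Lambda'):\Lambda'\succeq 0,\ \mathrm{range}(\Lambda')\subseteq\ker(D-A)\}$, which entails verifying that this cone is closed so that genuine membership --- not merely membership in its closure --- is obtained. The remaining ingredients (the normalization, the observation that $\tilde v\tilde v^\dag\preceq B$ for $\tilde v$ in the top eigenspace of $B$ of norm at most $1$, and the short probabilistic computation, which is exactly where the constant $c=e^{\gamma+1}$ emerges) are routine.
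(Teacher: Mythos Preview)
Your proof is correct and follows essentially the same route as the paper: normalize, extract a PSD correlation-matrix certificate from first-order optimality of the relaxation, use its Cholesky factor to parametrize candidate vectors $v$ lying in the top eigenspace, and bound the resulting ratio by a Gaussian probabilistic argument with Jensen's inequality that produces exactly the constant $e^{-\gamma}$. The only notable difference is organizational: the paper first reduces to $\hat D=I$ (\cref{claim:two}) and then obtains the certificate by linearizing the objective and invoking SDP strong duality (\cref{claim:three}), which cleanly sidesteps the cone-closedness issue you flag as the main obstacle in your Farkas formulation.
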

	
	Note that the above shows that for every hermitian PSD matrix $A\in \C^{n\times n}$, there exists a diagonal matrix $D$ and a rank 1 matrix $vv^\dag$ such that
	\[ D\succeq A\succeq vv^\dag, \]
	and $\per(D)\leq c^n \per(vv^\dag)$ for $c=e^{\gamma+1}$. Thus $\per(A)$ is sandwiched between $\per(D)$ and $\per(vv^\dag)$, two quantities that differ by at most a simply exponential factor. 
	
	It is also worth noting that there is no additional loss in approximating $\per(A)$ by the permanent of a rank one matrix.  In \cref{sec:tight}, we will show that the constant $e^{\gamma+1}$ is not only asymptotically tight in \cref{thm:rank1}, but also in \cref{thm:main}. 
	
	Another interesting corollary of \cref{thm:rank1} is that  that instead of $\rel(A)$ we can use  $\per(vv^\dag)$  as an approximation of $\per(A)$, with the same $e^{n(\gamma+1)}$ approximation factor:
	\begin{equation} \label{eq:rank1} \sup\{\per(vv^\dag):v\in \C^n\text{ and }A\succeq vv^\dag\}. \end{equation}
	Moreover, $\per(vv^\dag)$ is easily computable.
	\begin{fact}
		\label{fact:rank1}
		For a vector $v\in \C^n$, we have $\per(vv^\dag)=n!\cdot \prod_{i=1}^n\norm{v_i}^2$.
	\end{fact}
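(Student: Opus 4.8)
The plan is to compute $\per(vv^\dag)$ directly from the definition. Writing $A = vv^\dag$, the $(i,j)$ entry is $A_{ij} = v_i\conj{v_j}$, so
\[ \per(vv^\dag) = \sum_{\sigma\in S_n}\prod_{i=1}^n v_i\conj{v_{\sigma(i)}} = \sum_{\sigma\in S_n}\left(\prod_{i=1}^n v_i\right)\left(\prod_{i=1}^n \conj{v_{\sigma(i)}}\right). \]
The key observation is that for every permutation $\sigma\in S_n$, the product $\prod_{i=1}^n \conj{v_{\sigma(i)}}$ is merely a reordering of $\prod_{j=1}^n \conj{v_j}$ and hence does not depend on $\sigma$. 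Consequently each of the $n!$ summands equals $\prod_{i=1}^n v_i\conj{v_i} = \prod_{i=1}^n \norm{v_i}^2$, and summing over $S_n$ yields $\per(vv^\dag) = n!\cdot\prod_{i=1}^n \norm{v_i}^2$.

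Alternatively, one can read this off as a special case of \cref{lem:cauchy}: take $d=1$ and $U = v^\dag\in\C^{1\times n}$, so that $U^\dag U = vv^\dag$. For a scalar $x\sim\CN(0,1)$ we then have $U^\dag x = xv$, hence $\pinorm{U^\dag x}^2 = \norm{x}^{2n}\prod_{i=1}^n\norm{v_i}^2$, and taking expectations with $\E[\norm{x}^{2n}] = n!$ from \cref{fact:normal-moments} gives the same identity.

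Since the computation is elementary, there is no real obstacle here; the only point requiring a moment's care is the bookkeeping that $\sigma$ ranging over $S_n$ permutes the indices of the conjugated factors $\conj{v_{\sigma(i)}}$ without changing their product, which is what collapses the sum into $n!$ identical terms.
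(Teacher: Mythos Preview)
Your primary argument is correct and is essentially identical to the paper's proof: both compute each summand of $\per(vv^\dag)$ as $\prod_i v_i\conj{v_{\sigma(i)}}=\prod_i\norm{v_i}^2$ independently of $\sigma$, then multiply by $\lvert S_n\rvert=n!$. The alternative derivation via \cref{lem:cauchy} is also valid but unnecessary for such an elementary fact.
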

	\begin{proof}
		For any permutation $\sigma\in S_n$ we have
		\[ \prod_{i=1}^n (vv^\dag)_{i, \sigma(i)}=\prod_{i=1}^n v_i\conj{v_{\sigma(i)}}=\prod_{i=1}^n v_i \cdot \prod_{i=1}^n \conj{v_i}=\prod_{i=1}^n\norm{v_i}^2. \]
		Since $\per(vv^\dag)$ is the sum of the above quantity for all $\sigma\in S_n$, we get that $\per(vv^\dag)=n!\cdot \prod_{i=1}^n\norm{v_i}^2$.
	\end{proof}
	
	Even though $\per(vv^\dag)$ has a closed form, we do not have an efficient way of computing the $\sup$ in \cref{eq:rank1}, whereas, as we show in \cref{sec:computing}, $\rel(A)$ can be computed efficiently. 
	
	The next section is dedicated to proving \cref{thm:rank1}. To finish up the proof of \cref{thm:intro-main} we need to design an algorithm to compute $\rel(A)$ for a given PSD matrix $A$.
	\begin{theorem}
		\label{thm:comp}
		There is an algorithm that outputs an $e^{n(\gamma+1)}$-approximation of $\per(A)$ for any hermitian PSD $A\in \C^{n\times n}$ in time $\poly(n+\complexity{A})$, where $\complexity{A}$ represents the bit complexity of $A$.
	\end{theorem}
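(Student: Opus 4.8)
The plan is to reduce \cref{thm:comp} to computing the relaxation value $\rel(A)$ from \cref{def:rel} to within a $(1+\epsilon)$ multiplicative factor in time $\poly(n+\complexity{A}+\log(1/\epsilon))$, and then to exhibit that computation as a genuine convex program. For the reduction, note that $\rel(A)$ is itself an $e^{n(\gamma+1)}$-approximation of $\per(A)$: every diagonal $D\succeq A\succeq 0$ satisfies $\per(D)\ge\per(A)$ by \cref{lem:monotone}, so $\rel(A)\ge\per(A)$, while \eqref{thm:main} gives $\per(A)\ge e^{-n(\gamma+1)}\rel(A)$. Hence returning any $\widetilde r$ with $\rel(A)\le\widetilde r\le(1+\epsilon)\rel(A)$ yields a $(1+\epsilon)e^{n(\gamma+1)}$-approximation of $\per(A)$, and $\epsilon$ can be taken arbitrarily small (e.g.\ $2^{-n}$) at a cost polynomial in the input size.

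The first thing to confront is that the definition of $\rel(A)$ is \emph{not} a convex program as written: with $D=\diag(d)$ one has $\per(D)=\prod_i d_i$, so $\log\rel(A)=\inf\{\sum_i\log d_i:\diag(d)\succeq A\}$, which minimizes the \emph{concave} map $d\mapsto\sum_i\log d_i$ over a spectrahedron. The crucial step is a change of variables turning this into a concave \emph{maximization}. Assume first $A\succ 0$. Since inversion reverses the Loewner order on the positive definite cone, $\diag(d)\succeq A$ is equivalent to $A^{-1}\succeq\diag(1/d_1,\dots,1/d_n)$; writing $e_i=1/d_i$ and using $\prod_i d_i=1/\prod_i e_i$ gives
\[ \rel(A)=\Big(\max\big\{\textstyle\prod_{i=1}^n e_i \;:\; e\in\R^n_{\ge 0},\ \diag(e)\preceq A^{-1}\big\}\Big)^{-1}. \]
Taking logarithms, $-\log\rel(A)$ equals the maximum of the concave function $\sum_i\log e_i$ over the convex body $K:=\{e\in\R^n_{\ge 0}:\diag(e)\preceq A^{-1}\}$, which is now a standard convex optimization problem.

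To solve it I would verify the hypotheses needed by off-the-shelf algorithms: $K$ is bounded, since testing $\diag(e)\preceq A^{-1}$ against coordinate vectors gives $0\le e_i\le (A^{-1})_{ii}$; $K$ is full-dimensional, since it contains a ball around $\delta\,\mathbf 1$ for any $0<\delta<1/\lambda_{\max}(A)$; and $K$ admits a polynomial-time separation oracle --- given $e$, check $e\ge 0$ and compute the least eigenvalue of the hermitian matrix $A^{-1}-\diag(e)$, returning a violated nonnegativity constraint or, from a negative-eigenvalue eigenvector $w$, the valid inequality $\sum_i\norm{w_i}^2 e_i\le w^\dag A^{-1}w$ that $e$ violates. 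Maximizing $\sum_i\log e_i$ over $K$ to additive error $\eta$, hence computing $\rel(A)$ to multiplicative error $e^{\eta}$, is then doable in time $\poly(n+\complexity{A}+\log(1/\eta))$ by the ellipsoid method, or more cleanly by a path-following interior-point method for which $-\sum_i\log e_i$ (simultaneously minus the objective and a barrier for $e\ge 0$) and $-\log\det(A^{-1}-\diag(e))$ supply the requisite self-concordant barriers. The only subtlety is the logarithmic blow-up of the objective on $\partial K$, handled routinely, e.g.\ by optimizing instead over $\{e\in K: e_i\ge\delta\ \forall i\}$ for a small $\delta$ of polynomial bit complexity, losing at most an additive $n$ in the objective value.

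It remains to drop the assumption $A\succ 0$. If $A_{ii}=0$ for some $i$ then, $A$ being PSD, its $i$-th row and column vanish, so $\per(A)=0$ and likewise $\rel(A)=0$ (send $d_i\to 0$); the algorithm detects this and outputs $0$. Otherwise all $A_{ii}>0$, and we perturb to $A_\tau:=A+\tau I\succ 0$: here $\rel(A)\le\rel(A_\tau)$ since the feasible set only shrinks, and whenever $\diag(d)\succeq A$ we have $\diag(d)+\tau I\succeq A_\tau$, so $\rel(A_\tau)\le\prod_i(d_i+\tau)\le(\prod_i d_i)(1+\tau/\min_i A_{ii})^n$; taking the infimum over $d$ gives $\rel(A_\tau)\le\rel(A)(1+\tau/\min_i A_{ii})^n$. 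Choosing $\tau$ of polynomial bit complexity small enough that this factor is below $1+\epsilon$ and running the previous step on $A_\tau$ finishes the argument. I expect the main --- indeed essentially the only conceptual --- obstacle to be noticing that the natural formulation of $\rel(A)$ is a concave minimization and hence not obviously tractable; once the inversion substitution recasts it as a concave maximization over a convex body, the rest is routine convex-optimization technology together with the perturbation bookkeeping above.
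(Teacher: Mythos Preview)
Your approach is correct and essentially the same as the paper's: both observe that the substitution $d_i\mapsto 1/d_i$ converts $\rel(A)$ into a genuine convex program (the paper writes the constraint via a Cholesky factor $A=V^\dag V$ as $I\succeq V\diag(x)V^\dag$, which also handles singular $A$ directly and so avoids your perturbation step). One small point: as written you obtain a $(1+\epsilon)e^{n(\gamma+1)}$-approximation, whereas the paper gets exactly $e^{n(\gamma+1)}$ by noting that the proof of \eqref{thm:main} actually gives the slightly stronger factor $\frac{n^n}{n!}e^{n\gamma}<e^{n(\gamma+1)}$ and using that slack to absorb the $(1+\epsilon)$.
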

We will prove the above theorem in \cref{sec:computing}.
\Cref{thm:rank1,thm:comp} together complete the proof of \cref{thm:intro-main}. In \cref{sec:tight} we show that the constant $c=e^{\gamma+1}$ in \cref{thm:main} is asymptotically tight.

	\subsection{Proof of the Main Result}
	\label{sec:proof}
	
	In order to prove \cref{thm:rank1}, we use a seemingly unrelated quantity about distributions on unit vectors $\{u\in \C^d: \norm{u}^2=u^\dag u=1\}$. Let us define this quantity below.
	\begin{definition}
		For a discrete distribution $\U$ supported on the sphere $\{u\in \C^d:\norm{u}^2=u^\dag u=1\}$, define
		\[ f(\U):=\sup_{x\in \spn(\U)}\left\{ \frac{e^{\E_{u\sim \U}[\ln(\norm{u^\dag x}^2)]}}{\E_{u\sim \U}[\norm{u^\dag x}^2]} \right\}, \]
		where $\spn(\U)$ is the span of the support of $\U$, i.e., the set of vectors for which the denominator is nonzero.
	\end{definition}
	We will prove \cref{thm:rank1} by showing that there exists $v\in \C^n$ such that $A\succeq vv^\dag$ and
	\[ \per(vv^\dag)\geq \frac{n!}{n^n}f(\U)^{n}\cdot \rel(A), \]
	where $\U$ is an appropriately constructed distribution on unit vectors. The expression $n!/n^n$ is lower bounded by $e^{-n}$. Thus if we show that $f(\U)\geq e^{-\gamma}$, the above inequality would imply the multiplicative factor of $e^{n(\gamma+1)}$ desired in \cref{thm:rank1}.
	
	To gain some intuition about $f(\U)$, note that by Jensen's inequality, applied to the concave function $\ln$, it is easy to see that $f(\U)\leq 1$:
	\[ \frac{e^{\E_{u\sim \U}[\ln(\norm{u^\dag x}^2)]}}{\E_{u\sim \U}[\norm{u^\dag x}^2]}\leq \frac{e^{\ln(\E_{u\sim \U}[\norm{u^\dag x}^2])}}{\E_{u\sim \U}[\norm{u^\dag x}^2]}=1. \]
	On the other hand, we will show that for all $\U$, $f(\U)\geq e^{-\gamma}$.
	\begin{proposition}
		\label{prop:fu}
		For all discrete distributions $\U$ supported on the sphere $\{u\in \C^d:\norm{u}^2=u^\dag u=1\}$,
		\[ f(\U)\geq e^{-\gamma}. \]
	\end{proposition}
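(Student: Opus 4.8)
The plan is a short application of the probabilistic method together with Jensen's inequality. Rather than exhibiting a good $x$ directly, I will draw $x$ at random over $\spn(\U)$ from a complex normal distribution and show that the logarithm of the quantity inside the supremum has expectation at least $-\gamma$; since a supremum dominates an expectation, this forces some $x$ to witness $f(\U)\geq e^{-\gamma}$.

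Concretely, fix an orthonormal basis $e_1,\dots,e_k$ of the subspace $\spn(\U)\subseteq\C^d$ and let $x=\sum_{j=1}^k g_j e_j$ with $g_1,\dots,g_k$ i.i.d.\ $\CN(0,1)$, so that $x\in\spn(\U)$ always. The key observation is that every unit vector $u$ in the support of $\U$ lies in $\spn(\U)$, so writing $u=\sum_j c_j e_j$ we have $\sum_j\norm{c_j}^2=\norm{u}^2=1$ and $u^\dag x=\sum_j\conj{c_j}g_j$; by \cref{fact:linear-functional}, read in the coordinates given by $e_1,\dots,e_k$, this means $u^\dag x\sim\CN(0,1)$ for each such $u$.

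Now take $\E_x$ of the relevant quantities and interchange it with $\E_{u\sim\U}$ (legitimate for a discrete $\U$, since $\ln\norm{g}^2$ and $\norm{g}^2$ are absolutely integrable for $g\sim\CN(0,1)$). By \cref{fact:normal-log}, $\E_x\bigl[\E_{u\sim\U}[\ln\norm{u^\dag x}^2]\bigr]=\E_{u\sim\U}[-\gamma]=-\gamma$, and by \cref{fact:normal-moments} with $n=m=1$, $\E_x\bigl[\E_{u\sim\U}[\norm{u^\dag x}^2]\bigr]=1$. Applying Jensen's inequality to the concave function $\ln$ gives $\E_x\bigl[\ln\E_{u\sim\U}[\norm{u^\dag x}^2]\bigr]\leq\ln 1=0$, and subtracting yields
\[ \E_x\Bigl[\,\E_{u\sim\U}[\ln\norm{u^\dag x}^2]-\ln\E_{u\sim\U}[\norm{u^\dag x}^2]\,\Bigr]\geq-\gamma. \]
Hence there is a realization $x_0\in\spn(\U)$ — with $\E_{u\sim\U}[\norm{u^\dag x_0}^2]>0$, as $x_0\neq 0$ is not orthogonal to $\spn(\U)$ — for which $\E_{u\sim\U}[\ln\norm{u^\dag x_0}^2]-\ln\E_{u\sim\U}[\norm{u^\dag x_0}^2]\geq-\gamma$, i.e.\ $e^{\E_u[\ln\norm{u^\dag x_0}^2]}/\E_u[\norm{u^\dag x_0}^2]\geq e^{-\gamma}$, which is exactly the claimed bound on $f(\U)$.

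I do not expect a serious obstacle here: once one hits on the idea of integrating against a complex Gaussian supported on $\spn(\U)$, the proof is just \cref{fact:normal-log}, \cref{fact:normal-moments}, and Jensen. The only points requiring a little care are (i) keeping $x$ inside $\spn(\U)$ while still having $u^\dag x\sim\CN(0,1)$ — handled by working in an orthonormal basis of the span, so that \cref{fact:linear-functional} applies verbatim in those coordinates — and (ii) the routine integrability bookkeeping behind the Fubini interchange and Jensen's inequality, which is harmless because $\U$ is discrete and $\ln\norm{g}^2$ is integrable. The choice of the right random vector $x$ is really the whole content; everything else is mechanical.
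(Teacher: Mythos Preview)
Your proof is correct and follows essentially the same approach as the paper's: draw $x$ as a standard complex Gaussian in $\spn(\U)$, use \cref{fact:linear-functional} so that each $u^\dag x\sim\CN(0,1)$, plug in \cref{fact:normal-log} and \cref{fact:normal-moments}, and conclude by the probabilistic method with one application of Jensen. The only cosmetic difference is that the paper applies Jensen to the convex function $\exp$ to show $\E_x\bigl[e^{\E_u[\ln|u^\dag x|^2]}-e^{-\gamma}\E_u[|u^\dag x|^2]\bigr]\geq 0$, whereas you apply Jensen to the concave function $\ln$ to show $\E_x\bigl[\E_u[\ln|u^\dag x|^2]-\ln\E_u[|u^\dag x|^2]\bigr]\geq -\gamma$; these are dual formulations of the same idea and neither buys anything over the other.
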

	This universal lower bound is independent of the dimension $d$ or the size of the support of $\U$. We defer the proof of \cref{prop:fu} to the end of this section.
	
	Let us now prove \cref{thm:rank1}, assuming correctness of \cref{prop:fu}.
	\begin{proof}[Proof of \cref{thm:rank1}]
		Let us break down the proof into a series of claims, and then prove them one by one.
		\begin{claim}
			\label{claim:one}
			The infimum in \cref{eq:rel} is achieved by some diagonal matrix $\opt{D}=\opt{D}(A)$.	In other words there exists a diagonal matrix $\opt{D}\succeq A$ such that $\per(\opt{D})=\rel(A)$.
		\end{claim}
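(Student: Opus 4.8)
The plan is to show that the infimum in \cref{eq:rel} is attained, by exhibiting a compact set over which we are minimizing a continuous function. The feasibility set is $\{D \text{ diagonal}: D \succeq A\}$; identifying a diagonal matrix with its vector of diagonal entries, this is the set $\{d \in \R^n : \diag(d) \succeq A\}$. First I would observe that this set is nonempty: since $A$ is PSD, every diagonal entry $A_{ii}$ is nonnegative, and taking $d_i$ large enough — e.g. $d_i = \tr(A)$ for all $i$ — makes $\diag(d) - A$ diagonally dominant with nonnegative diagonal, hence PSD. So the infimum is over a nonempty set and $\rel(A) < \infty$. I would also note that the set is closed, being the preimage of the closed PSD cone under the continuous (linear) map $d \mapsto \diag(d) - A$, and convex.

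The set is not compact, but it suffices to minimize over a compact subset. The objective $\per(\diag(d)) = \prod_{i=1}^n d_i$ is continuous in $d$. The key point is a truncation argument: fix any feasible $d^{(0)}$ (e.g. $d^{(0)}_i = \tr(A)$), with finite objective value $M := \prod_i d^{(0)}_i$. Any feasible $d$ satisfies $d_i \ge A_{ii} \ge 0$ for every $i$ (restrict the quadratic form $\diag(d) \succeq A$ to the $i$-th coordinate vector). If some coordinate $d_i$ exceeds $M / \prod_{j: A_{jj} > 0} A_{jj}$ — and if some $A_{jj} = 0$ then $\per(\diag(d)) = 0 \le M$ trivially and we may as well restrict to the case where all $A_{jj} > 0$ — then $\prod_i d_i \ge M$, so such $d$ cannot beat $d^{(0)}$. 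Hence the infimum is unchanged if we intersect the feasible set with the box $\prod_i [A_{ii}, R]$ for a suitably large finite $R$. This intersection is closed and bounded, hence compact, and nonempty (it contains $d^{(0)}$, after possibly enlarging $R$). By the extreme value theorem the continuous function $d \mapsto \prod_i d_i$ attains its minimum on this compact set at some $\opt{d}$, and by construction $\opt{D} := \diag(\opt{d})$ achieves $\rel(A) = \per(\opt{D})$.

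The only mildly delicate point — and the place I would be most careful — is handling the degenerate case where some diagonal entry $A_{ii}$ vanishes. If $A_{ii} = 0$ for some $i$, then $A \succeq 0$ forces the entire $i$-th row and column of $A$ to vanish (a PSD matrix with a zero diagonal entry has that whole row/column zero), so in fact $\per(A) = 0$ and one can take $\opt{D}$ with a single zero on the diagonal, matching the lower $A_{ii}$; alternatively, one simply observes $\rel(A) = 0$ in this case and any feasible $D$ with $D_{ii}=0$ works, so the infimum is trivially attained. Apart from this bookkeeping, the argument is a routine compactness statement, so I do not anticipate a genuine obstacle; the substantive content of the paper lies in the subsequent claims bounding $\per(vv^\dagger)$ from below in terms of $\rel(A)$.
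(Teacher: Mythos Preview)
Your argument is correct and mirrors the paper's: split into the case where all $A_{ii}>0$ (truncate the feasible set to a compact box and apply the extreme value theorem) and the degenerate case where some $A_{ii}=0$ (exhibit an explicit minimizer with a zero on the diagonal, using that the $i$-th row and column of $A$ vanish). Two small slips worth tightening: the threshold for $d_i$ should be $M/\prod_{j\neq i} A_{jj}$ (your stated bound is off by a factor of $A_{ii}$, though you correctly pass to a generic ``suitably large $R$'' anyway), and $\tr(A)I-A$ need not be diagonally dominant in general --- but $\tr(A)I\succeq A$ holds for the simpler reason that $\tr(A)\geq\lambda_{\max}(A)$.
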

		\begin{claim}
			\label{claim:two}
			We may assume without loss of generality that $\opt{D}=I$.
		\end{claim}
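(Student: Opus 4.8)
The plan is to use a diagonal change of variables to normalize $\opt{D}$ to the identity, while tracking how each object in the desired inequality transforms. By Claim \ref{claim:one} there is a diagonal $\opt{D}\succeq A$ with $\per(\opt{D})=\rel(A)$; since $\per(\opt{D})=\prod_i \opt{D}_{ii}$ and we are in the nontrivial case where $\rel(A)>0$, every diagonal entry $\opt{D}_{ii}$ is strictly positive. Hence $\opt{D}^{1/2}$ is a well-defined invertible diagonal matrix. I would set $A' := \opt{D}^{-1/2} A \opt{D}^{-1/2}$. First I would check that $A'$ is hermitian PSD and that $I\succeq A'$: conjugating $\opt{D}\succeq A\succeq 0$ by the invertible matrix $\opt{D}^{-1/2}$ preserves the Loewner order, and $\opt{D}^{-1/2}\opt{D}\opt{D}^{-1/2}=I$.

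Next I would relate $\rel(A)$ and $\rel(A')$. The map $D\mapsto \opt{D}^{-1/2} D \opt{D}^{-1/2}$ is a bijection from diagonal matrices $D\succeq A$ to diagonal matrices $D'\succeq A'$ (again because conjugation by an invertible matrix preserves $\succeq$, and this particular conjugation keeps diagonal matrices diagonal). Under this map $\per$ scales by the fixed factor $\prod_i \opt{D}_{ii}^{-1} = 1/\rel(A)$, so $\rel(A') = \rel(A)/\per(\opt{D}) = 1$, and indeed the infimum for $A'$ is attained at $I$. Then I would argue that it suffices to prove \cref{thm:rank1} for $A'$: if we have found $v'\in\C^n$ with $A'\succeq v'v'^\dag$ and $c^n\per(v'v'^\dag)\ge \rel(A')=1$, then setting $v := \opt{D}^{1/2} v'$ gives $A = \opt{D}^{1/2} A' \opt{D}^{1/2} \succeq \opt{D}^{1/2} v'v'^\dag \opt{D}^{1/2} = vv^\dag$, and by \cref{fact:rank1} (or directly) $\per(vv^\dag) = \prod_i \opt{D}_{ii}\cdot \per(v'v'^\dag)$, so $c^n\per(vv^\dag) = \per(\opt{D})\cdot c^n\per(v'v'^\dag) \ge \rel(A)\cdot 1 = \rel(A)$, which is exactly the conclusion of \cref{thm:rank1} for $A$. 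Therefore replacing $A$ by $A'$ we may assume $\opt{D}=I$.

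There is no serious obstacle here — every step is a routine consequence of the fact that congruence by an invertible matrix preserves the PSD order and that congruence by an invertible \emph{diagonal} matrix preserves diagonality while multiplying the permanent of a diagonal (or rank-one) matrix by a fixed scalar. The only point that needs a word of care is the degenerate case $\rel(A)=0$: then $\per(A)=0$ as well by \cref{lem:monotone} (take $v=0$), so \cref{thm:rank1} holds trivially and we may assume $\rel(A)>0$, which is what makes $\opt{D}$ invertible and the normalization legitimate.
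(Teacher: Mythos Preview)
Your argument is correct and follows essentially the same route as the paper: both conjugate by $\opt{D}^{-1/2}$ (the paper writes this as $T_\lambda$ with $\lambda_i=\opt{D}_{ii}^{-1/2}$), observe that this bijects diagonal dominants of $A$ with those of $A'$ while scaling the permanent by $\per(\opt{D})^{-1}$, and then transport a witness $v'$ for $A'$ back to $v=\opt{D}^{1/2}v'$ for $A$. Your handling of the degenerate case $\rel(A)=0$ via $v=0$ is also the same in spirit as the paper's remark that $\opt{D}\succ 0$ may be assumed.
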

		\begin{claim}
			\label{claim:three}
			The first-order optimality condition of $\opt{D}$ implies that there exists a correlation matrix $B\in \C^{n\times n}$, i.e., a hermitian PSD matrix with $1$s on its main diagonal, such that $AB=B$.
		\end{claim}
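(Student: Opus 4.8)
The plan is to read off the first-order optimality conditions for $\opt D$ and feed them into semidefinite programming duality. By \cref{claim:two} we may take $\opt D=I$, so that $I\succeq A$ and $\per(I)=\rel(A)$; writing $D=\diag(d)$ with $d\in\R^n$, the all-ones vector $\opt d=(1,\dots,1)$ minimizes the differentiable objective $g(d):=\per(\diag(d))=\prod_{i=1}^n d_i$ over the convex feasible set $F:=\{d\in\R^n:\diag(d)\succeq A\}$.

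The first step is to linearize the objective. Since $\opt d$ minimizes $g$ over the convex set $F$, the standard first-order optimality inequality gives $\langle\nabla g(\opt d),\,d-\opt d\rangle\ge 0$ for all $d\in F$. As $\nabla g(\opt d)=(1,\dots,1)$, this says precisely that $\opt d$ also minimizes the \emph{linear} functional $d\mapsto\sum_{i=1}^n d_i=\tr(\diag(d))$ over $F$, with optimal value $n$. (Alternatively one can skip this reduction and write the KKT conditions for the original nonlinear program directly: Slater's condition holds since $d=(2,\dots,2)$ is strictly feasible, so there is a PSD multiplier $B$ with $B_{ii}=\partial_{d_i}g(\opt d)=1$ for all $i$ and $\tr((I-A)B)=0$ by complementary slackness, which is already the information we need.)

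The second step is SDP duality applied to $\min\{\tr(\diag(d)):\diag(d)-A\succeq 0\}$. Its dual is $\max\{\tr(AB):B\succeq 0,\ B_{ii}=1\text{ for all }i\}$, i.e.\ the maximum of $\tr(AB)$ over correlation matrices $B$. Slater's condition holds for the primal (at $d=(2,\dots,2)$ we have $\diag(d)-A\succeq 2I-I\succ 0$), so there is no duality gap; since the set of correlation matrices is compact, the dual optimum is attained, say by a correlation matrix $B$ with $\tr(AB)=n$.

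The last step is to conclude. Since $B$ is a correlation matrix, $\tr(B)=\sum_i B_{ii}=n$, so together with $\tr(AB)=n$ we get $\tr((I-A)B)=0$. Both $I-A$ and $B$ are hermitian PSD, and the trace of a product of two hermitian PSD matrices vanishes only if the product itself vanishes; hence $(I-A)B=0$, i.e., $AB=B$, which is exactly the claim. The only point I expect to require care is the validity of strong duality together with attainment of the dual optimum; both are secured by Slater's condition for the primal and compactness of the set of correlation matrices, and the KKT route sidesteps even this at the mild cost of invoking optimality conditions for a smooth nonconvex objective, which is legitimate here since the single constraint satisfies a constraint qualification.
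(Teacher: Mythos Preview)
Your argument is correct and follows essentially the same route as the paper: both linearize the optimality of $\opt D=I$ to a diagonal SDP, take its dual to produce a correlation matrix $B$, and use $\tr((I-A)B)=0$ together with $I-A\succeq 0$, $B\succeq 0$ to conclude $AB=B$. The only cosmetic difference is that the paper writes the primal in the shifted variable $X=D-I$ (so the optimal value is $0$ rather than $n$); your added care about Slater's condition and attainment of the dual optimum via compactness of the set of correlation matrices is a welcome clarification that the paper leaves implicit.
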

		We may use the Cholesky decomposition to write $B=U^\dag U$ where $U\in \C^{d\times n}$ for $d=\rank(B)$.
		\begin{claim}
			\label{claim:four}
			For any $x\in \C^d$ the vector $v=U^\dag x/\norm{U^\dag x}$ satisfies
			 \[ A\succeq vv^\dag. \]
		\end{claim}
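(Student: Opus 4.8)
The plan is to show that $v$ is a unit eigenvector of $A$ with eigenvalue $1$, and then to deduce $A \succeq vv^\dag$ from this fact together with $A \succeq 0$; notably, the upper bound $A \preceq I = \opt D$ is not needed for this claim.

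First I would show that $w := U^\dag x$ lies in the range of $B$. A standard computation gives $\mathrm{range}(B) = \mathrm{range}(U^\dag U) = \mathrm{range}(U^\dag)$: the inclusion $\mathrm{range}(U^\dag U) \subseteq \mathrm{range}(U^\dag)$ is immediate, and it is an equality because $z^\dag U^\dag U z = \norm{Uz}^2$ forces $\mathrm{null}(U^\dag U) = \mathrm{null}(U)$, whence $\rank(U^\dag U) = \rank(U) = \rank(U^\dag)$ and the two ranges, being nested with equal dimension, coincide. This also shows that $U$ has full row rank $d = \rank(B)$, so $U^\dag$ is injective and $w \ne 0$ whenever $x \ne 0$ (the only case in which $v$ is defined). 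Now, since $w \in \mathrm{range}(B)$, write $w = Bz$ for some $z \in \C^n$; then \cref{claim:three} yields $Aw = ABz = Bz = w$, so that $v = w/\norm{w}$ is a unit eigenvector of $A$ with eigenvalue $1$.

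To finish, I would verify directly that a hermitian PSD matrix $A$ possessing a unit eigenvector $v$ of eigenvalue $1$ satisfies $A \succeq vv^\dag$: for an arbitrary $w' \in \C^n$, decompose $w' = (v^\dag w')\,v + w'_\perp$ with $w'_\perp \perp v$; using $Av = v$ and $A = A^\dag$ (so $v^\dag A w'_\perp = (Av)^\dag w'_\perp = 0$ and likewise $(w'_\perp)^\dag A v = 0$) one obtains $(w')^\dag A w' = \norm{v^\dag w'}^2 + (w'_\perp)^\dag A w'_\perp \ge \norm{v^\dag w'}^2 = (w')^\dag (vv^\dag) w'$, so $A - vv^\dag \succeq 0$. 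I do not anticipate any real obstacle in this argument; the one point that needs a little care is the linear-algebra bookkeeping ensuring that the Cholesky factor $U$ has full row rank equal to $\rank(B)$, which is precisely what lets us pass from the optimality relation $AB = B$ to the eigenvalue equation $Aw = w$.
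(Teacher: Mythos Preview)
Your proposal is correct and follows essentially the same approach as the paper: both show that $U^\dag x$ lies in the range of $B$ (the paper by exhibiting the explicit preimage $U^\dag(UU^\dag)^{-1}x$, you via the dimension argument $\operatorname{range}(U^\dag U)=\operatorname{range}(U^\dag)$), then use $AB=B$ from \cref{claim:three} to conclude $A(U^\dag x)=U^\dag x$, and finally that a unit eigenvector $v$ of eigenvalue $1$ of a PSD matrix satisfies $A\succeq vv^\dag$. Your write-up spells out this last step explicitly, whereas the paper simply asserts it.
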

		Naturally we may want to choose $x$ so as to maximize $\per(vv^\dag)$.
		\begin{claim}
			\label{claim:five}
			We have
			\[ \sup_{x\in \C^d}\{\per(vv^\dag)\}=\frac{n!}{n^n}f(\U)^n, \]
			where $\U$ is the uniform distribution on the columns of $U$.
		\end{claim}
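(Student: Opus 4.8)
The plan is to establish \cref{claim:five} by directly substituting the closed form for $\per(vv^\dag)$ from \cref{fact:rank1} into the supremum and recognizing the result as $f(\U)^n$ up to the constant factor $n!/n^n$. First I would fix notation: let $u_1,\dots,u_n\in\C^d$ denote the columns of $U$, so that $\U$ is the uniform distribution on $\{u_1,\dots,u_n\}$. Since $B=U^\dag U$ is a correlation matrix, its $i$-th diagonal entry equals $u_i^\dag u_i=1$; hence each $u_i$ is a unit vector and $\U$ is genuinely supported on the sphere in $\C^d$. For $x\in\C^d$ with $U^\dag x\neq 0$, the $i$-th coordinate of $v=U^\dag x/\norm{U^\dag x}$ equals $v_i=u_i^\dag x/\norm{U^\dag x}$, so $\norm{v_i}^2=\norm{u_i^\dag x}^2/\norm{U^\dag x}^2$.

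Next I would rewrite the pieces of $\prod_{i=1}^n\norm{v_i}^2$ in terms of averages over $\U$. The denominator in each factor is $\norm{U^\dag x}^2=\sum_{i=1}^n\norm{u_i^\dag x}^2=n\cdot\E_{u\sim\U}[\norm{u^\dag x}^2]$, and taking logarithms turns the numerators into an exponential of an average, $\prod_{i=1}^n\norm{u_i^\dag x}^2=\exp\bigl(\sum_{i=1}^n\ln\norm{u_i^\dag x}^2\bigr)=\exp\bigl(n\cdot\E_{u\sim\U}[\ln\norm{u^\dag x}^2]\bigr)$. Plugging both into \cref{fact:rank1} yields
\[
\per(vv^\dag)=n!\prod_{i=1}^n\norm{v_i}^2=\frac{n!}{n^n}\left(\frac{e^{\E_{u\sim\U}[\ln\norm{u^\dag x}^2]}}{\E_{u\sim\U}[\norm{u^\dag x}^2]}\right)^{n}.
\]

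It then remains to take the supremum over $x\in\C^d$ and to reconcile this with the domain $\spn(\U)$ in the definition of $f(\U)$. Since $d=\rank(B)=\rank(U^\dag U)=\rank(U)$ and $U$ has exactly $d$ rows, the columns $u_1,\dots,u_n$ span all of $\C^d$, so $\spn(\U)=\C^d$; likewise $U^\dag\in\C^{n\times d}$ has rank $d$, so $U^\dag x=0$ iff $x=0$. Consequently the set of $x$ for which $v$ (equivalently $\per(vv^\dag)$) is defined is exactly the set of $x\in\spn(\U)$ for which the denominator in $f(\U)$ is nonzero. Taking the supremum of the displayed identity over this common domain, and using that $t\mapsto t^n$ is increasing on $[0,\infty)$, gives $\sup_{x\in\C^d}\per(vv^\dag)=\tfrac{n!}{n^n}f(\U)^n$, which is \cref{claim:five}.

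The computation itself is routine; the only step that warrants genuine care is the identification $\spn(\U)=\C^d$ and the resulting coincidence of the domains of the two suprema, both of which rest on the fact that the factor $U$ obtained from $B$ via Cholesky decomposition has full rank $d=\rank(B)$.
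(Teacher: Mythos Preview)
Your proof is correct and follows essentially the same route as the paper: both apply \cref{fact:rank1}, express $\norm{v_i}^2$ in terms of $u_i^\dag x$ and $\norm{U^\dag x}^2$, rewrite the product and sum as exponentiated and plain averages over $\U$, and then take the supremum. Your additional remarks---that each $u_i$ is a unit vector because $B$ is a correlation matrix, and that $\spn(\U)=\C^d$ because $\rank(U)=d$---are careful justifications that the paper leaves implicit, but they do not change the argument.
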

		And now the statement of \cref{thm:rank1} follows, because $\rel(A)=\per(\opt{D})=1$ when $\opt{D}=I$; we have found $v\in \C^n$ such that $A\succeq vv^\dag$ and
		\[ e^{n(\gamma+1)}\per(vv^\dag)\geq \frac{n^n}{n!}f(\U)^{-n}\per(vv^\dag)\geq 1=\rel(A). \]
		Let us now prove the claims one by one.
		\begin{proof}[Proof of \cref{claim:one}] 
			We divide the proof into two cases. First assume that $A_{ii}>0$ for all $i\in [n]$.
			Let $\lambda\geq 0$ be larger than the maximum eigenvalue of $A$. Then $\lambda I\succeq A$. This proves that $ \rel(A)\leq \lambda^n$.
			Note that $D\succeq A$ implies $D_{ii}\geq A_{ii}$ for all $i\in [n]$.  If any entry $D_{ii}$ of $D$ satisfies
			\[ D_{ii}> \frac{\lambda^nA_{ii}}{\prod_{j=1}^n A_{jj}}, \]
			then
			\[ \per(D)> \frac{\lambda^n A_{ii}}{\prod_{j=1}^n A_{jj}}\prod_{j\neq i} A_{jj}=\lambda^n. \]
			This effectively eliminates such a $D$ as a candidate for the $\inf$ in \cref{eq:rel}. Therefore we may take $\inf$ of $\per(D)$ over the set of all diagonal matrices $D$ which in addition to $D\succeq A$ satisfy
			\[ D_{ii}\leq \frac{\lambda^nA_{ii}}{\prod_{j=1}^n A_{jj}} \]
			for all $i\in [n]$. This is a compact set, and $\per(D)$ is a continuous function. Therefore the $\inf$ is achieved by some matrix $\opt{D}$.
			
			For the second case, assume that $A_{ii}=0$ for some $i$. Then since $A$ is PSD, the $i$-th row and the $i$-th column of $A$ are both zero.
			Let $\lambda$ be larger than the largest eigenvalue of $A$. Define $\opt{D}$ by $\opt{D}_{ii}=0$ and $\opt{D}_{jj}=\lambda$ for $j\neq i$. It is easy to see that $\opt{D}\succeq A$ and $\per(\opt{D})=0$. Therefore $\rel(A)=0$ and it is achieved at $\opt{D}$.
		\end{proof}
		\begin{proof}[Proof of \cref{claim:two}]
			First note that without loss of generality we may assume $\opt{D}(A)\succ 0$, since otherwise $\rel(A)=0$ and the conclusion of \cref{thm:rank1} is trivial.
			
			Now let $\lambda\in\R_{>0}^n$ be an arbitrary positive vector and define $T_\lambda: \C^{n\times n}\to \C^{n\times n}$ by
			\[ T_\lambda(M)=\diag(\lambda)M\diag(\lambda). \]
			Note that $T_\lambda$ respects the Loewner order and maps diagonal matrices to diagonal matrices. It is one-to-one and surjective on the space of diagonal matrices. The matrix $T_\lambda(M)$ is obtained from $M$ by multiplying column $i$ by $\lambda_i$ for $i\in [n]$ and then row $i$ by $\lambda_i$ for $i\in [n]$. Therefore
			\[ \per(T_\lambda(M))=\lambda_1^2\dots \lambda_n^2\per(M). \]
			This implies that
			\begin{align*} \lambda_1^2\dots \lambda_n^2\rel(A)&=\inf\{\lambda_1^2\dots\lambda_n^2\per(D):D\text{ is diagonal and }D\succeq A\}\\&=\inf\{\per(T_\lambda(D)):T_\lambda(D)\text{ is diagonal and }T_\lambda(D)\succeq T_\lambda(A)\}=\rel(T_\lambda(A)). \end{align*}
			It is also easy to see that the above also implies $\opt{D}(T_\lambda(A))=T_\lambda(\opt{D}(A))$. In particular if $\lambda$ is set so that $\lambda_i=1/\sqrt{\opt{D}_{ii}}$, then $\opt{D}(T_\lambda(A))=I$. So we can replace $A$ by $T_\lambda(A)$ and continue the proof of \cref{thm:rank1} to find $v\in \C^n$ satisfying
			\[ T_\lambda(A)\succeq vv^\dag, \]
			and $c^n\per(vv^\dag)\geq \rel(T_\lambda(A))=1$ with $c=e^{\gamma+1}$. Let $w=\diag(\lambda)^{-1}v$. Then $T_\lambda(ww^\dag)=vv^\dag$. This implies that
			\[ A\succeq ww^\dag, \]
			and
			\[ c^n \per(ww^\dag)=\frac{1}{\lambda_1^2\dots\lambda_n^2}c^n\per(vv^\dag)\geq \frac{1}{\lambda_1^2\dots\lambda_n^2}\rel(T_\lambda(A))=\rel(A). \]
		\end{proof}
		\begin{proof}[Proof of \cref{claim:three}]
			We use the first-order optimality condition of $\per(D)$ at $D=I$. Let us change $I$ to $I+X$ where $X$ is a diagonal matrix. Then if $X$ is small enough $\per(I+X)\simeq 1+\tr(X)$. More precisely, we have
			\[ \eval{\frac{d}{dt}\per(I+tX)}_{t=0}=\eval{\frac{d}{dt}\prod_{i=1}^n(1+tX_{ii})}_{t=0}=\sum_{i=1}^nX_{ii}=\tr(X). \]
			If $I+X\succeq D$ then $I+tX\succeq D$ for all $t\in [0, 1]$. If $\tr(X)<0$, then for small enough $t$, $\per(I+tX)<\per(I)$ which contradicts the fact that $\opt{D}(A)=I$. This implies that the optimal solution of the following SDP is $0$:
			\[
				\begin{array}{llr}
					\min_{X} & \tr(X)& \\
					\text{subject to} & I+X\succeq A&\\
					& X_{ij}=0&\forall i\neq j
				\end{array}
			\]
			The dual of this SDP has variables $B\succeq 0$, corresponding to the constraint $I+X\succeq A$, and $\mu_{ij}$ for $i\neq j$, corresponding to the constraint $X_{ij}=0$:
			\[
				\begin{array}{llr}
					\max_{B, \mu_{ij}} & \tr((A-I)B)\\
					\text{subject to} & B_{ij}+\mu_{ij}=0& \forall i\neq j\\
						& B_{ii}=1& \forall i\\
						& B\succeq 0
				\end{array}
			\]
			Because of strong duality, the optimum of this SDP is $0$. The optimal $B$ satisfies $B\succeq 0$ and $B_{ii}=1$ for $i\in [n]$, i.e., $B$ is a correlation matrix. We also have $\tr((I-A)B)=0$. But since $I-A\succeq 0$ and $B\succeq 0$, this implies that $(I-A)B=0$ or in other words $AB=B$.
		\end{proof}
		\begin{proof}[Proof of \cref{claim:four}]
			We have $B=U^\dag U$ with $U\in \C^{d\times n}$ and $\rank(B)=d$. This implies that $UU^\dag\in \C^{d\times d}$ is invertible. Now we have
			\[ BU^\dag (UU^\dag)^{-1}x=U^\dag UU^\dag(UU^\dag)^{-1}x=U^\dag x. \]
			This together with $AB=B$ implies that
			\[ AU^\dag x = ABU^\dag(UU^\dag)^{-1}x=BU^\dag(UU^\dag)^{-1}x=U^\dag x. \]
			In other words, $U^\dag x$ is an eigenvector of $A$ with eigenvalue $1$. This means that $v=U^\dag x/\norm{U^\dag x}$ is also such an eigenvector. So $Av=v$ and $\norm{v}=1$.  We conclude that $A\succeq vv^\dag$.
		\end{proof}
		\begin{proof}[Proof of \cref{claim:five}]
			Let us compute $\per(vv^\dag)$. By \cref{fact:rank1} we have
			\[ \per(vv^\dag)=n!\cdot \prod_{i=1}^n \norm{v_i}^2. \]
			Let the columns of $U$ be $u_1,\dots,u_n\in \C^d$. Then $v_i=u_i^\dag x/\norm{U^\dag x}$, and note that $\norm{U^\dag x}^2=\sum_{i=1}^n \norm{u_i^\dag x}^2$. We can rewrite $\per(vv^\dag)$ as
			\[ \per(vv^\dag)=n!\cdot \frac{\prod_{i=1}^n \norm{u_i^\dag x}^2}{(\sum_{i=1}^n \norm{u_i^\dag x}^2)^n}=\frac{n!}{n^n}\cdot \left(\frac{\sqrt[n]{\prod_{i=1}^n\norm{u_i^\dag x}^2}}{\frac{1}{n}\sum_{i=1}^n \norm{u_i^\dag x}^2}\right)^n. \]
			Now if we let $\U$ be the uniform distribution on $u_1,\dots,u_n$, we can rewrite the above as
			\[ \per(vv^\dag) = \frac{n!}{n^n} \cdot \left(\frac{\exp(\E_{u\sim \U}[\ln(\norm{u^\dag x}^2)])}{\E_{u\sim \U}[\norm{u^\dag x}^2]}\right)^n\]
			Therefore
			\[ \sup_{x\in \C^d}\{\per(vv^\dag)\}=\frac{n!}{n^n}f(\U)^n. \]
		\end{proof}
		This concludes the proof of \cref{thm:rank1}.
	\end{proof}
	
	It only remains to prove \cref{prop:fu}.
	\begin{proof}[Proof of \cref{prop:fu}]
		Without loss of generality we may assume that $\spn(\U)=\C^d$; if that is not the case, we can identify $\spn(\U)$ with $\C^{d'}$  for some $d'<d$ using a unitary transformation and nothing changes.
		
		Let $x\sim \CN(0, I)$ be a $d$-dimensional standard complex normal.  Let 
		\begin{align*}
			g(x)&=\exp(\E_{u\sim \U}[\ln(\norm{u^\dag x}^2)]),\\
			h(x)&=\E_{u\sim \U}[\norm{u^\dag x}^2].
		\end{align*}
		Then our goal is to prove that $\P_x[g(x)/h(x)\geq e^{-\gamma}]>0$ or equivalently $\P_x[g(x)-e^{-\gamma}h(x)\geq 0]>0$. To this end, we will prove that $\E_x[g(x)-e^{-\gamma}h(x)]\geq 0$, and the conclusion follows.
		
		By \cref{fact:linear-functional}, for each fixed $u$ in the support of $\U$, $u^\dag x\sim \CN(0, 1)$. Therefore we have
		\[ \E_x[h(x)]=\E_x\E_u[\norm{u^\dag x}^2]=\E_u\E_x[\norm{u^\dag x}^2]=\E_u[1]=1. \]
		On the other hand by \cref{fact:normal-log} we have
		\begin{align*}
			\E_x[g(x)]&=\E_x[\exp(\E_u[\ln(\norm{u^\dag x}^2)])]\geq \exp(\E_x\E_u[\ln(\norm{u^\dag x}^2)])\\ &=\exp(\E_u\E_x[\ln(\norm{u^\dag x}^2)])=\exp(\E_u[-\gamma])=e^{-\gamma},
		\end{align*}
		where the inequality is an application of Jensen's to the convex function $\exp$. Putting these together we get that $\E_x[g(x)-e^{-\gamma}h(x)]\geq e^{-\gamma}-e^{-\gamma}=0$ as desired.
	\end{proof}
	
	\subsection{Computing the Approximation}
	\label{sec:computing}
	
	In this section we show how to approximately compute $\rel(A)$. The main result of this section will be \cref{thm:comp}.
	
	The main ingredient of the proof is transforming $\rel(D)$ to the objective of a convex program. The original optimization problem that computes $\rel(D)$ is the following:
	\[
		\begin{array}{ll}
			\min_{D}& D_{11}\dots D_{nn}\\
			\text{subject to}& D\succeq A\\
			& D\text{ is diagonal}
		\end{array}
	\]
	
	The objective is not concave, even if we apply $\ln$ to it. The trick is to change from the variables $D_{11},\dots, D_{nn}$ to $D_{11}^{-1},\dots,D_{nn}^{-1}$. If we have the Cholesky decomposition $A=V^\dag V$ for some $V\in \C^{d\times n}$, then $D\succeq A$ if and only if
	\[ I\succeq V D^{-1}V^\dag. \]
	So we can turn the optimization problem into the following by identifying $D^{-1}$ with $\diag(x)$.
	\begin{equation}
		\label{eq:cp}
		\begin{array}{llr}
			\min_{x\in \R^n}& -\ln(x_1\dots x_n)\\
			\text{subject to}&I\succeq V \diag(x)V^\dag\\
			&x_i\geq 0&\forall i
		\end{array}
	\end{equation}
	If the objective of the above program is $\OPT$, then $\rel(A)=e^{\OPT}$. Note that $-\ln(x_1\dots x_n)$ is convex over $\R_{\geq 0}^n$, so the above is a valid convex program.
	\begin{proof}[Proof of \cref{thm:comp}]
		We can detect whether $\rel(A)=0$ by checking whether any of $A$'s main diagonal entries are $0$. See the proof of \cref{claim:one}.
		
		When all of the main diagonal entries of $A$ are strictly positive, similar to the proof of \cref{claim:one}, we can determine upper and lower bounds on the optimum $x_i$. In particular if $\lambda$ is a number larger than the largest eigenvalue of $A$, for the optimum $x_i$ we have
		\[ A_{ii}^{-1}\geq x_i\geq \frac{\prod_{j=1}^n A_{jj}}{\lambda^n A_{ii}}. \]
		Thus, we can restrict the domain of the convex program in \cref{eq:cp} to a compact bounded domain. We can compute the Cholesky decomposition of $A$ and then use our favorite convex programming technique, such as the ellipsoid method, to find the optimum value of \cref{eq:cp} to within accuracy $\epsilon$ in time $\poly(n+\complexity{A}+\log(1/\epsilon))$. This gives us a $1+\epsilon$ approximation of $\rel(A)$ which by \cref{thm:main} is a $(1+\epsilon)c^n$ approximation of $\per(A)$ for $c=e^{\gamma+1}$.
		
		As a final remark, we note that the approximation factor $e^{n(\gamma+1)}$ in \cref{thm:main} can in fact be slightly strengthened to
		\[ \frac{n^n}{n!}e^{n\gamma}, \]
		if one carefully reviews the proof. The term $n^n/n!$ is at most $e^n$, but the difference allows us to absorb $1+\epsilon$ into the approximation factor for an appropriately chosen $\epsilon$. This allows us to state an $\epsilon$-free result: We can find an $e^{n(\gamma+1)}$ approximation to $\per(A)$ in time $\poly(n+\complexity{A})$.
	\end{proof}
	\section{Asymptotically Tight Examples}
	\label{sec:tight}
	In this section we show that the constant $c=e^{\gamma+1}$ cannot be replaced by anything smaller in \cref{thm:main}. In other words we will construct $n\times n$ hermitian PSD matrices $A$ such that
	\[ \sqrt[n]{\frac{\rel(A)}{\per(A)}}\to e^{\gamma+1}. \]
	
	The construction will begin with a distribution $\U$ that is uniform over $n$ unit vectors $u_1,\dots,u_n\in \C^d$. We will later show how we can construct $\U$ so that $f(\U)$ is arbitrarily close to $e^{-\gamma}$.
	\begin{lemma}
		\label{lem:uconst}
		For any $\epsilon>0$ there exists a distribution $\U$ that is uniform over $n$ unit vectors $u_1,\dots,u_n\in \C^d$ for some $n$ and $d$ that satisfies
		\[ f(\U)\leq e^{-\gamma}+\epsilon. \]
	\end{lemma}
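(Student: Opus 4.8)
The plan is to let $\U$ be the empirical distribution of $n$ independent points $u_1,\dots,u_n$ drawn uniformly from the complex unit sphere $S=\{u\in\C^d:\norm{u}^2=1\}$, where $d$ is taken large and then $n$ very large depending on $d$; I will show that with positive probability such a sample satisfies $f(\U)\le e^{-\gamma}+\epsilon$. The heuristic is that the rotationally invariant uniform measure $\nu$ on $S$ has, for \emph{every} unit vector $x$, the exact identities $\E_{u\sim\nu}[\norm{u^\dag x}^2]=1/d$ and $\E_{u\sim\nu}[\ln(\norm{u^\dag x}^2)]=\psi(1)-\psi(d)=-H_{d-1}$, because $\norm{u^\dag x}^2$ has a $\mathrm{Beta}(1,d-1)$ distribution. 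Hence the ``ideal'' value of the ratio defining $f$ is $d\,e^{-H_{d-1}}$, and since $H_{d-1}=\ln d+\gamma+o(1)$ we have $d\,e^{-H_{d-1}}\to e^{-\gamma}$. So the first step is to fix $d$ large enough that $d\,e^{-H_{d-1}}<e^{-\gamma}+\epsilon/2$.

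The second step is to show that the empirical distribution over $n$ such vectors approximates both expectations \emph{uniformly in $x$}. For the denominator, $\E_{u\sim\U}[\norm{u^\dag x}^2]=x^\dag\!\left(\tfrac1n\sum_i u_iu_i^\dag\right)\!x$, and by the matrix law of large numbers (the summands have operator norm $1$ and mean $\tfrac1d I$ by rotational symmetry) $\tfrac1n\sum_i u_iu_i^\dag\to\tfrac1d I$ in operator norm as $n\to\infty$; so for $n$ large, with high probability the denominator lies in $[\tfrac1d-\delta,\tfrac1d+\delta]$ for all unit $x$, which in particular forces $\spn(\U)=\C^d$ and nonvanishing of the denominator. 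For the numerator I need an \emph{upper} bound $\sup_{\norm{x}=1}\tfrac1n\sum_i\ln(\norm{u_i^\dag x}^2)\le -H_{d-1}+\delta$. The obstacle is that $\ln(\norm{u_i^\dag x}^2)$ is unbounded below and not Lipschitz near the hyperplanes $u_i^\dag x=0$, so a crude net argument fails; but since we only need an upper bound, we may write $\tfrac1n\sum_i\ln(\norm{u_i^\dag x}^2)=\tfrac1n\ln(\norm{q(x)}^2)$ where $q(x)=\prod_i u_i^\dag x$ is a holomorphic polynomial of degree $n$ in $x_1,\dots,x_d$, so $\norm{q}^2$ is a polynomial of degree $2n$ on $\C^d\simeq\R^{2d}$. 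A Bernstein--Markov inequality on the sphere then gives $\sup_{\norm{x}=1}\norm{q(x)}^2\le 2\sup_{y\in\mathcal N}\norm{q(y)}^2$ for a net $\mathcal N\subset S$ of polynomially small mesh and cardinality $e^{O(d\log n)}$; and a Chernoff bound — using that for fixed unit $y$ the variables $Z_i:=\ln(\norm{u_i^\dag y}^2)$ satisfy $Z_i\le 0$, $\E[e^{sZ_i}]=\E[B^s]\le 1$ for $s\ge 0$ with $B\sim\mathrm{Beta}(1,d-1)$, and $\E[Z_i]=\psi(1)-\psi(d)=-H_{d-1}$ — yields $\P\!\left[\tfrac1n\sum_i Z_i>-H_{d-1}+\tfrac{\delta}{2}\right]\le e^{-c(d,\delta)n}$ for each fixed $y$, with $c(d,\delta)>0$. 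Since $e^{O(d\log n)}\cdot e^{-c(d,\delta)n}\to 0$ with $d$ fixed, a union bound over $\mathcal N$, together with the $\tfrac1n\ln 2\to0$ correction from the Bernstein--Markov step, gives $\sup_{\norm{x}=1}\tfrac1n\sum_i\ln(\norm{u_i^\dag x}^2)\le -H_{d-1}+\delta$ with high probability.

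Combining the two bounds, for such a sample and every unit $x$,
\[
\frac{e^{\E_{u\sim\U}[\ln(\norm{u^\dag x}^2)]}}{\E_{u\sim\U}[\norm{u^\dag x}^2]}
\le\frac{e^{-H_{d-1}+\delta}}{\tfrac1d-\delta}
=d\,e^{-H_{d-1}}\cdot\frac{e^{\delta}}{1-d\delta},
\]
so, having fixed $d$ first, choosing $\delta>0$ small enough makes the right-hand side at most $d\,e^{-H_{d-1}}+\epsilon/2\le e^{-\gamma}+\epsilon$, whence $f(\U)\le e^{-\gamma}+\epsilon$. Finally, choosing $n$ large makes all the high-probability events above hold simultaneously with probability close to $1$, so a sample with the desired property exists. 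I expect the main technical work to be the uniform numerator bound — marrying the polynomial Bernstein--Markov net estimate with the Chernoff tail bound for $\ln(\norm{u^\dag y}^2)$ — since the denominator estimate and the asymptotics $d\,e^{-H_{d-1}}\to e^{-\gamma}$ are routine.
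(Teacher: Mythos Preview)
Your proposal is correct and shares the paper's high-level strategy: take $\U$ to be the empirical distribution of $n$ i.i.d.\ samples from the uniform measure on the complex unit sphere in $\C^d$, fix $d$ large so that the ``ideal'' ratio is close to $e^{-\gamma}$, and then take $n$ large to make the empirical averages uniformly close to their means. The difference lies in how uniformity in $x$ is obtained. For the denominator you use matrix concentration on $\tfrac{1}{n}\sum_i u_iu_i^\dag$, and for the numerator you treat $\lvert q(x)\rvert^2=\prod_i\lvert u_i^\dag x\rvert^2$ as a degree-$2n$ polynomial, invoke a Bernstein--Markov bound to pass to a net of size $e^{O(d\log n)}$, and beat that by a Chernoff tail $e^{-c(d,\delta)n}$. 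The paper instead avoids the unboundedness of $\ln$ by an elementary regularization: it fixes a finite $\epsilon$-cover $\{o_i\}$ of the sphere, uses $\lvert u^\dag x\rvert\le \lvert u^\dag o_i\rvert+\epsilon$ and $\lvert u^\dag x\rvert\ge \lvert u^\dag o_i\rvert-\epsilon$ to replace $\ln(\lvert u^\dag x\rvert^2)$ and $\lvert u^\dag x\rvert^2$ by the bounded surrogates $\ln((\lvert u^\dag o_i\rvert+\epsilon)^2)$ and $\max(0,\lvert u^\dag o_i\rvert-\epsilon)^2$, applies the ordinary law of large numbers at each of the finitely many $o_i$, and only then sends $\epsilon\to 0$. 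Your route is more quantitative (it gives explicit exponential rates and the clean Beta$(1,d-1)$ identification yielding $d\,e^{-H_{d-1}}\to e^{-\gamma}$), while the paper's $\epsilon$-regularization trick is lighter on machinery and sidesteps the Bernstein--Markov step entirely.
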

	We postpone the proof of \cref{lem:uconst} to the end of this section. For now we use it to show the following. The following proposition together with \cref{lem:uconst} show that $e^{\gamma+1}$ cannot be improved in \cref{thm:main}.
	\begin{proposition}
		\label{prop:tight}
		Given a distribution $\U$ that is uniform over a finite number of unit vectors $u_1,\dots, u_n$, we can construct a sequence of matrices $A_1,A_2,\dots$ of sizes $n_1\times n_1,n_2\times n_2,\dots$ such that
		\[ \sqrt[n_k]{\frac{\rel(A_k)}{\per(A_k)}}\to ef(\U)^{-1}. \]
	\end{proposition}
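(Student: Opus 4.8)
The plan is to realise $\U$ concretely as (the range of) a correlation matrix and take a projection. Let $U\in\C^{d\times n}$ be the matrix with columns $u_1,\dots,u_n$, and put $B:=U^\dag U$; since the $u_i$ are unit vectors, $B$ is a correlation matrix (PSD with unit diagonal). Writing $J_k$ for the $k\times k$ all--ones matrix, I would take
\[ A_k:=\text{the orthogonal projection of }\C^{nk}\text{ onto }\mathrm{range}(B\otimes J_k),\qquad n_k:=nk, \]
equivalently $A_k=P_U\otimes\tfrac1k J_k$ where $P_U$ projects onto $\mathrm{range}(B)$. The argument then splits into computing $\rel(A_k)$ exactly and computing the exponential growth rate of $\per(A_k)$.

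First I would show $\rel(A_k)=1$. Since $A_k$ is a projection, $I\succeq A_k$, so $\rel(A_k)\le\per(I)=1$. For the reverse, note that $C:=B\otimes J_k$ is a correlation matrix whose range equals $\mathrm{range}(A_k)$; fix a spectral decomposition $C=\sum_a\mu_a q_aq_a^\dag$ with $\mu_a>0$ and $\{q_a\}$ orthonormal. Let $D$ be diagonal with $D\succeq A_k$. No diagonal entry of $D$ vanishes: $D_{ii}=0$ would force $e_i\in\ker A_k=\ker C$, contradicting $C_{ii}=1$; hence $D\succ0$. As each $q_a$ is a unit vector in $\mathrm{range}(A_k)$ we have $A_k\succeq q_aq_a^\dag$, so $D\succeq q_aq_a^\dag$, i.e.\ $q_a^\dag D^{-1}q_a=\sum_i\norm{q_{a,i}}^2/D_{ii}\le1$. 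Multiplying by $\mu_a$ and summing over $a$ gives $\sum_i C_{ii}/D_{ii}\le\tr C$, that is, $\sum_i 1/D_{ii}\le nk$. By AM--GM, $\per(D)=\prod_i D_{ii}\ge1$, so $\rel(A_k)\ge1$.

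Next I would compute $\per(A_k)$ via \cref{lem:cauchy}. After restricting to $\mathrm{range}(B)$ we may assume $\rank U=d$; then $W:=(UU^\dag)^{-1/2}U$ satisfies $W^\dag W=P_U$, $WW^\dag=I_d$, and $A_k=W_k^\dag W_k$ for $W_k:=W\otimes\tfrac1{\sqrt k}\mathbf 1_k^\dag$, with $W_kW_k^\dag=I_d$. With $w_i$ the $i$-th column of $W$ and $\psi(y):=\prod_{i=1}^n\norm{w_i^\dag y}^2$, \cref{lem:cauchy} yields
\[ \per(A_k)=\E_{y\sim\CN(0,I_d)}\bigl[\pinorm{W_k^\dag y}^2\bigr]=\frac1{k^{nk}}\,\E_{y\sim\CN(0,I_d)}\bigl[\psi(y)^k\bigr]. \]
For an upper bound, $\psi$ is $2n$--homogeneous, so $\psi(y)\le\norm{y}^{2n}\max_{\norm{\omega}=1}\psi(\omega)$, and $\E[\norm{y}^{2nk}]=\Gamma(d+nk)/\Gamma(d)$ since $\norm{y}^2$ is $\Gamma(d,1)$--distributed; Stirling's formula then gives $\limsup_k\per(A_k)^{1/nk}\le\tfrac ne\bigl(\max_{\norm{\omega}=1}\psi(\omega)\bigr)^{1/n}$. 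For the matching lower bound, since $A_k$ is a projection, $A_k\succeq vv^\dag$ for $v:=W_k^\dag y/\norm{W_k^\dag y}$, so by \cref{lem:monotone,fact:rank1}, $\per(A_k)\ge\per(vv^\dag)=(nk)!\prod_\alpha\norm{v_\alpha}^2$; using $\norm{W_k^\dag y}^2=\norm{W^\dag y}^2$ together with $\norm{W^\dag\omega}^2=\omega^\dag WW^\dag\omega=\norm{\omega}^2$, optimising over $y$ gives $\sup_y\prod_\alpha\norm{v_\alpha}^2=k^{-nk}\bigl(\max_{\norm{\omega}=1}\psi(\omega)\bigr)^k$, hence $\liminf_k\per(A_k)^{1/nk}\ge\tfrac ne\bigl(\max_{\norm{\omega}=1}\psi(\omega)\bigr)^{1/n}$.

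It remains to identify $\max_{\norm{\omega}=1}\psi(\omega)$ with $f(\U)$. Since $w_i=(UU^\dag)^{-1/2}u_i$ and $(UU^\dag)^{-1/2}$ is hermitian, substituting $x=(UU^\dag)^{-1/2}\omega$ turns $\{\norm{\omega}=1\}$ into $\{\sum_i\norm{u_i^\dag x}^2=1\}$ and $\psi(\omega)$ into $\prod_i\norm{u_i^\dag x}^2$; comparing this with the scale--invariant rewriting $f(\U)=\sup_x\frac{(\prod_i\norm{u_i^\dag x}^2)^{1/n}}{\frac1n\sum_i\norm{u_i^\dag x}^2}$ shows $\max_{\norm{\omega}=1}\psi(\omega)=(f(\U)/n)^n$. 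Therefore $\per(A_k)^{1/n_k}\to\tfrac ne\cdot\tfrac{f(\U)}n=e^{-1}f(\U)$, and combined with $\rel(A_k)=1$,
\[ \sqrt[n_k]{\rel(A_k)/\per(A_k)}=\per(A_k)^{-1/n_k}\longrightarrow ef(\U)^{-1}. \]
The step I expect to matter most is the choice of construction itself: taking $A_k$ to be the projection onto $\mathrm{range}(B\otimes J_k)$ is precisely what makes $\rel$ collapse to $1$ through the correlation--matrix trace identity, while keeping $\per$ tractable and asymptotically pinned to $\max_{\omega}\psi=(f(\U)/n)^n$; once the construction is fixed, the remaining estimates are routine Stirling/AM--GM bookkeeping (the separate ingredient \cref{lem:uconst}, making $f(\U)$ close to $e^{-\gamma}$, is handled elsewhere).
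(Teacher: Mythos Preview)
Your proof is correct and follows essentially the same route as the paper: your $A_k=P_U\otimes\tfrac1kJ_k$ is precisely the paper's matrix $A=V^\dag V$ built from the list $u_1,\dots,u_n$ after repeating each vector $k$ times, and your trace/AM--GM derivation of $\rel(A_k)=1$ together with the homogeneity-plus-Gaussian-moment upper bound on $\per(A_k)$ coincide with \cref{claim:relbound,claim:perbound}. The only cosmetic differences are that you package the repetition via the tensor with $J_k$ and make the matching lower bound on $\per(A_k)$ explicit through the rank-one estimate, which the paper leaves implicit (it follows from \cref{claim:five}).
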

	
	\begin{proof}
	Our goal is to construct a PSD matrix $A$ and relate $\rel(A)/\per(A)$ to $f(\U)$. We will assume without loss of generality that $\spn\{u_1,\dots,u_n\}=\C^d$; otherwise, we use a unitary transformation to map $u_1,\dots, u_n$ onto a lower dimensional space and $f(\U)$ would not change.
	
	Consider the matrix $U\in \C^{d\times n}$ whose columns are $u_1,\dots,u_n$. Note that $\rank(U)=d$ and $U^\dag U\succeq 0$ has $1$s on the main diaognal. In other words $U^\dag U$ is a correlation matrix of rank $d$. Since $\rank(U)=d$, the matrix $UU^\dag$ is invertible and we can define
	\[ V:=(UU^\dag)^{-1/2}U, \]
	and
	\[ A:=V^\dag V=U^\dag(UU^\dag)^{-1}U.\]
	We will study $\rel(A)$ and $\per(A)$ and relate them to $f(\U)$.
	
	As observed in the proof of \cref{claim:three}, correlation matrices can be used as optimality certificates for $\rel$, albeit in that context first order optimality was just a necessary condition. We now make a formal claim by certifying that $\rel(A)=1$ using $U^\dag U$ as the certificate.
	\begin{claim}
		\label{claim:relbound}
		If $A$ is constructed as above, then
		\[ \rel(A)=\rel(V^\dag V)=1. \]
	\end{claim}
	\begin{proof}
		We clearly have $I\succeq U^\dag(UU^\dag)^{-1}U=V^\dag V$. This implies that $\rel(A)\leq 1$. Now consider a diagonal matrix $D\succeq A=V^\dag V$. We need to show that $\per(D)\geq 1$. Without loss of generality, by adding a small multiple of $I$ if necessary, we may assume that $D\succ 0$. Now $D\succeq V^\dag V$ implies that
		\[ I\succeq V D^{-1}V^\dag, \]
		which in turn implies
		\[ UU^\dag=(UU^\dag)^{1/2}(UU^\dag)^{1/2}\succeq (UU^\dag)^{1/2}VD^{-1}V^\dag(UU^\dag)^{1/2}=UD^{-1}U^\dag. \]
		By taking the trace we get
		\[ \tr(U^\dag U)=\tr(UU^\dag)\geq \tr(UD^{-1}U^\dag)=\tr(D^{-1}U^\dag U). \]
		Since $U^\dag U$ has $1$s on the diagonal and $D$ is diagonal the above becomes
		\[ n\geq \sum_{i=1}^n D_{ii}^{-1}. \]
		By using the AM-GM inequality we get
		\[ (D_{11}^{-1}\dots D_{nn}^{-1})^{1/n}\leq \frac{\sum_{i=1}^n D_{ii}^{-1}}{n}\leq 1. \]
		This means that $\per(D)=D_{11}\dots D_{nn}\geq 1$.
	\end{proof}
	
	Next we study $\per(A)$. This is where the term $f(\U)$ appears.
	\begin{claim}
		\label{claim:perbound}
		If $A$ is constructed as above, then
		\[ \per(A)\leq \frac{n!}{n^n}\binom{n+d-1}{d-1} \cdot f(\U)^n. \]
	\end{claim}
	Before proving \cref{claim:perbound}, let us show why it suffices to finish the proof of \cref{prop:tight}. By \cref{claim:relbound} and \cref{claim:perbound} we have
	\[
		\sqrt[n]{\frac{\rel(A)}{\per(A)}}\geq \sqrt[n]{\frac{n^n}{n!}}\cdot \sqrt[n]{\binom{n+d-1}{d-1}^{-1}}\cdot f(\U)^{-1}.
	\]
	This is not quite the same as $ef(\U)^{-1}$ yet. However we have one degree of freedom we have not used. Initially we assumed $\U$ was a uniform distribution over $n$ unit vectors. But we might have as well assumed that it is a uniform distribution over $nk$ unit vectors for any integer $k$, by simply repeating the vectors in the support of $\U$. Therefore we may make $n$ as large as we would like without changing $d$ or $f(\U)$. As $n\to\infty$, by Stirling's formula we have
	\[ \sqrt[n]{\frac{n^n}{n!}}\to e, \]
	and by a simple bound for large enough $n$
	\[ \sqrt[n]{\binom{n+d-1}{d-1}^{-1}}\geq \sqrt[n]{n^{-d}}\to 1. \]
	Therefore as $n\to\infty$ we have
	\[ \sqrt[n]{\frac{\rel(A)}{\per(A)}}\to ef(\U)^{-1}. \]
	
	It only remains to prove \cref{claim:perbound}.
	\begin{proof}[Proof of \cref{claim:perbound}]
		We will use \cref{lem:cauchy} to write down $\per(A)=\per(V^\dag V)$. Let $x\in \C^d$ be distributed according to a $d$-dimensional standard complex normal $\CN(0, I)$. Then according to \cref{lem:cauchy} we have
		\[ \per(A)=\E_{x\sim \CN(0, I)}[\pinorm{V^\dag x}^2]. \]
		Our goal is to use $f(\U)$ to bound $\pinorm{V^\dag x}$. According to the definition of $f(\U)$, for the vector $y=(UU^\dag)^{-1/2}x$ we have
		\[ \frac{\sqrt[n]{\prod_{i=1}^n \norm{u_i^\dag y}^2}}{\frac{1}{n}\sum_{i=1}^n \norm{u_i^\dag y}^2}=\frac{\exp(\E_{u\sim \U}[\ln(\norm{u^\dag y}^2)]}{\E_{u\sim \U}[\norm{u^\dag y}^2]}\leq f(\U). \]
		Note that
		\[ u_i^\dag y=(U^\dag y)_i=(U^\dag (UU^\dag)^{-1/2}x)_i=(V^\dag x)_i. \]
		This means that $\prod_{i=1}^n \norm{u_i^\dag y}^2=\pinorm{V^\dag x}^2$. We also have
		\[ \sum_{i=1}^n \norm{u_i^\dag y}^2=x^\dag VV^\dag x=x^\dag (UU^\dag)^{-1/2}UU^\dag (UU^\dag)^{-1/2} x=x^\dag x=\norm{x}^2. \]
		Putting these together we get
		\[ \pinorm{V^\dag x}^2 \leq \left(\frac{f(\U)\norm{x}^2}{n}\right)^n=\left(\frac{f(\U)}{n}\right)^n\norm{x}^{2n}. \]
		Let us now compute $\E_x[\norm{x}^{2n}]$. We have
		\[ \E_x[\norm{x}^{2n}]=\E_x[\prod_{j=1}^n(\sum_{i=1}^d \norm{x_i}^2)]=\sum_{\substack{k_1,\dots, k_d\geq 0\\k_1+\dots+k_d=n}}\binom{n}{k_1,\dots,k_d}\E_x[\norm{x_1}^{2k_1}\dots\norm{x_d}^{2k_d}]. \]
		According to \cref{fact:normal-moments}, we have $\E_x[\norm{x_1}^{2k_1}\dots \norm{x_d}^{2k_d}]=k_1!\dots k_d!$. Therefore
		\[ \E_x[\norm{x}^{2n}]=\sum_{\substack{k_1,\dots, k_d\geq 0\\k_1+\dots+k_d=n}}\binom{n}{k_1,\dots,k_d}k_1!\dots k_d!=\sum_{\substack{k_1,\dots, k_d\geq 0\\k_1+\dots+k_d=n}}n!=n!\binom{n+d-1}{d-1}, \]
		where in the last equality we used the fact the number of ways to write $n$ as a sum of $d$ nonnegative integers is $\binom{n+d-1}{d-1}$. We conclude by getting
		\[ \per(A)\leq \left(\frac{f(\U)}{n}\right)^n\E_x[\norm{x}^{2n}]=\frac{n!}{n^n}\binom{n+d-1}{d-1}f(\U)^n. \]
	\end{proof}
	
	This finishes the proof of \cref{prop:tight}.
	\end{proof}
	
	Now we switch gears and construct the distribution $\U$ promised by \cref{lem:uconst}.
	\begin{proof}[Proof of \cref{lem:uconst}]
		The idea is to make $\U$ be close to the uniform distribution on the sphere $\{u\in \C^d:\norm{u}=1\}$ for some large $d$. If we were allowed to pick $\U$ to be uniform over the sphere, then intuitively all choices of $x$ in the definition of $f(\U)$ would yield the same value and we would be able to argue about this common value using the same tricks as in the proof of \cref{prop:fu}. Instead we use the uniform distribution on a large number of samples from the sphere to serve as the proxy for the uniform distribution on the sphere itself. We further need the dimension $d$ to grow, to make the uniform distribution on the sphere similar to a (scaled) normal distribution. We now make these formal.
		
		Let us fix some $d$ and let $\S$ denote the uniform distribution on the sphere $\{u\in \C^d:\norm{u}=1\}$. For any fixed distance $\epsilon$ we can cover the sphere by a finite number of balls $B(o_1,\epsilon),\dots, B(o_m, \epsilon)$ where $o_1,\dots, o_m$ are unit vectors and
		 \[ B(o, \epsilon)=\{v\in \C^d: \norm{o-v}\leq \epsilon \}. \]
		 Let $n$ be a large number and draw $n$ random points $u_1,\dots,u_n$ from $\S$. We will let $\U$ be the uniform distribution over $u_1,\dots, u_n$. We would like to argue that $f(\U)$ is with high probability close to $f(\S)$. Because the sphere was covered by the balls around $o_i$'s, for each unit vector $x$ we have $\norm{x-o_i}\leq \epsilon$ for some $i$. This implies that
		 \begin{align*} \E_{u\sim \U}[\ln(\norm{u^\dag x}^2)]&\leq \E_{u\sim \U}[\ln((\norm{u^\dag o_i}+\epsilon)^2)], \\
		  \E_{u\sim \U}[\norm{u^\dag x}^2]&\geq \E_{u\sim \U}[\max(0, \norm{u^\dag o_i}-\epsilon)^2]. \end{align*}
		 On the other hand by the law of large numbers for each $o_i$ we have with high probability as $n\to \infty$
		 \begin{align*}
		 	\E_{u\sim \U}[\ln((\norm{u^\dag o_i}+\epsilon)^2)] &\to \E_{u\sim \S}[\ln((\norm{u^\dag o_i}+\epsilon)^2)], \\
		 	\E_{u\sim \U}[\max(0,\norm{u^\dag o_i}-\epsilon)^2] &\to \E_{u\sim \S}[\max(0, \norm{u^\dag o_i}-\epsilon)^2]. \\
		 \end{align*}
		 Let us condition on the event that the LHS of the above are sufficiently close to the RHS for all $o_i$. This event happens with high probability as $n\to\infty$. Note that because of symmetry, the RHS of the above are independent of the choice of $o_i$. Under this condition we have for all unit vectors $x$
		 \[ \frac{\exp(\E_{u\sim \U}[\ln(\norm{u^\dag x}^2)])}{\E_{u\sim \U}[\norm{u^\dag x}^2]}\leq \frac{\exp(\E_{u\sim \S}[\ln((\norm{u^\dag o}+\epsilon)^2)])}{\E_{u\sim \S}[\max(0, \norm{u^\dag o}-\epsilon)^2]}+\delta, \]
		 where $o$ is any arbitrary vector and
		 $\delta\to 0$ as $n\to \infty$. The above bounds the LHS for unit vectors $x$. However note that the LHS does not change if we scale $x$ by any constant. Therefore $f(\U)$ is bounded by the RHS. As we take the limit with $\epsilon\to 0$ and $\delta\to 0$ we get $\U$ with $f(\U)$ asymptotically bounded by $f(\S)$.
		 
		 Now it only remains to show that as the dimension $d$ grows $f(\S)\to e^{-\gamma}$. Let $o$ be an arbitrary point with $\norm{o}^2=d$ such as $\sqrt{d}e_1$ where $e_1$ is the first element of the standard basis. When $u\sim \S$ is a random point on the sphere, we would like to argue that $u^\dag o$ is almost distributed like $\CN(0, 1)$. If this were the case we would have
		 \[ f(S) = \frac{\exp(\E_{u}[\ln(\norm{u^\dag o}^2)])}{\E_{u}[\norm{u^\dag o}^2]} \simeq \frac{\exp(\E_{g\sim\CN(0, 1)}[\ln(\norm{g}^2)])}{\E_{g\sim\CN(0, 1)}[\norm{g}^2]}=e^{-\gamma}, \]
		 where in the last equality we used \cref{fact:normal-log}.
		 
		 To make this approximation rigorous, let us generate the random point $u$ on the sphere by the following process: We sample a standard $d$-dimensional complex normal $v\sim \CN(0, I)$ and then we let $u=v/\norm{v}$. We have $u^\dag o=v_1\frac{d}{\norm{v}}$. Therefore
		 \[ \E_u[\ln(\norm{u^\dag o}^2)]=\E_v[\ln(\norm{v_1}^2)]+2\ln(d)-2\E_v[\ln(\norm{v}^{2})]. \]
		 The random variable $\norm{v}^2$ is distributed according to a $\frac{1}{2}$-scaled $\chi^2$-distribution with $2d$ degrees of freedom which is the same as $\Gamma(d, 1)$. We can therefore write
		 \[ \E_v[\ln(\norm{v}^2)]=\psi(d)=\ln(d-1)+o(1), \]
		 where $\psi$ is the digamma function \cite{Cha93, AS64}. We therefore have $\E_u[\ln(\norm{u^\dag o}^2)]=-\gamma+o(1)$.
		 
		 For $\E_u[\norm{u^\dag o}^2]$ we observe that
		 \[ \E_u[\norm{u^\dag o}^2]=d\cdot \E_v\left[\frac{\norm{v_1}^2}{\norm{v}^2}\right].\]
		 The random variables $\norm{v_i}^2/\norm{v}^2$ are identically distributed for different $i$. As such we have
		 \[ \E_u[\norm{u^\dag o}^2]=d\cdot \E_v\left[\frac{\norm{v_1}^2}{\norm{v}^2}\right]=\E_v\left[\frac{\norm{v_1}^2}{\norm{v}^2}\right]+\dots+\E_v\left[\frac{\norm{v_d}^2}{\norm{v}^2}\right]=\E_v\left[\frac{\norm{v}^2}{\norm{v}^2}\right]=1. \]
		 Therefore
		 \[ \frac{\exp(\E_u[\ln(\norm{u^\dag o}^2)])}{\E_u[\norm{u^\dag o}^2]}=e^{-\gamma+o(1)}. \]
		 This shows that $f(\S)\to e^{-\gamma}$ as $d\to\infty$ and concludes the proof.
	\end{proof}

%
	
	\printbibliography
\end{document}